\documentclass {siamltex}

\usepackage[english]{babel}
\usepackage{amsfonts}

\usepackage{amsmath}
\usepackage{amssymb,amsbsy}
\usepackage{amscd}
\usepackage{latexsym}
\usepackage{float}
\usepackage{graphicx}
\usepackage{setspace}
\usepackage{fancyhdr}
\usepackage{color}
\usepackage{stmaryrd,algorithm}




\newcommand{\jump}[1]{\llbracket #1 \rrbracket }

%
%

%
%

\newcommand{\OmegaC}{{C}}

%
%




\newcommand{\bbR}{\mathbb{R}}

\usepackage{subfigure}
\usepackage[bordercolor=white, color=white, colorinlistoftodos]{todonotes}

\newtheorem{assumption}{Assumption}[section]

\usepackage{color}
\definecolor{myorange}{rgb}{0.9568,0.4941,0.1961}
\definecolor{myred}{rgb}{0.9098,0.1294,0.2078}
\definecolor{myblue}{rgb}{0.0352,0.4981,0.6509}
\definecolor{myhyperblue}{rgb}{0.1607,0.3922,0.9}
\definecolor{mygreen}{rgb}{0.2235,0.6353,0.2588}
\definecolor{mygrey}{rgb}{0.3,0.3,0.3}


\graphicspath{{Figures/}}
\title{
{Augmented Lagrangian finite element methods for
  contact problems}}

\author{Erik Burman\thanks{Department of Mathematics, 
University College London, Gower Street, London, 
UK--WC1E  6BT, 
United Kingdom; ({\tt e.burman@ucl.ac.uk})}
\and Peter Hansbo\thanks{Department of Mechanical Engineering, J\"onk\"oping University,
SE-55111 J\"onk\"oping, Sweden; ({\tt peter.hansbo@ju.se})}
\and Mats G. Larson \thanks{Department of Mathematics and Mathematical Statistics, Ume{\aa} University, 
SE-901 87 Ume{\aa}, Sweden; ({\tt mats.larson@math.umu.se})}
}
 
\begin{document}

\maketitle

\begin{abstract}
We propose two different Lagrange multiplier methods for contact
problems derived from the augmented Lagrangian variational formulation. Both
the obstacle problem, where a constraint on the solution is imposed in
the bulk domain and the Signorini problem, where a lateral contact
condition is imposed are considered. We consider
both continuous and discontinuous approximation spaces for the
Lagrange multiplier. In the latter case the method is unstable and a
penalty on the jump of the multiplier must be applied for
stability. We prove the existence and uniqueness of discrete
solutions, best approximation estimates and convergence estimates that
are optimal compared to the regularity of the solution.
\end{abstract}

\newtheorem{cor}{Corollary}[section]
\newtheorem{remark}{Remark}[section]

\section{Introduction}
We consider the Signorini problem, find $u$ and $\lambda$ such that
\begin{equation}\label{signorini}
\begin{array}{rcl}
-\Delta u &=& f \mbox{ in } \Omega \\
u &= & 0  \mbox{ on } \Gamma_D \\
u \leq 0,\; \lambda \leq 0, \; u \,\lambda &=& 0 \mbox{ on } \Gamma_C,
\end{array}
\end{equation}
or the obstacle problem
\begin{equation}\label{obstacle}
\begin{array}{rcl}
-\Delta u -\lambda &=& f \mbox{ in } \Omega \\
u &= & 0  \mbox{ on } \partial \Omega\\
u \leq 0,\; \lambda \leq 0, \; u \,\lambda &=& 0 \mbox{ in } \Omega.
\end{array}
\end{equation}
Here $\Omega\subset \bbR^d$, $d=2,3$ is a bounded polyhedral
(polygonal) domain and $f\in L_2(\Omega)$. It is well known that these problems admit unique solutions $u \in
H^1(\Omega)$. This follows from the theory of Stampacchia applied to
the corresponding variational inequality (see for instance
\cite{HHN96}). 

From a mechanical point of view, these equations model the deflection of a membrane in isotropic tension under the load $f$, assuming small deformations.
The membrane is either in contact with an obstacle on part of the boundary, (\ref{signorini}), or in the interior of the membrane, (\ref{obstacle}), preventing positive displacements $u$. 
In both cases the Lagrange multiplier has the interpretation of a distributed reaction force enforcing the contact condition $u\leq 0$.

\section{Finite element discretization}

Our aim in this paper is to design a consistent penalty method for contact
problems that can easily be included in a standard
Lagrange-multiplier method, without having to resort to the solution
of variational inequalities. We consider two different choices for the
multiplier spaces, either a stable choice or an
unstable choice where a stabilization term is needed to ensure the
stability of the formulation. In
the latter case
we add a penalty on the jump of the multiplier over element
faces in the spirit of \cite{BH10a,BH10b}.

There exists a large body of litterature treating finite
element methods for contact
problems \cite{BHR78,KO88,GleT89,BB00,BBB03,BBR03,Wriggers2007,WPGW12,CH13a}. Discretization of \eqref{signorini} is usually performed on
the variational inequality or using a penalty method. The first case however leads to some nontrivial
choices in the construction of the discretization spaces in order to
satisfy the nonpenetration condition and associated inf-sup conditions
and until recently it has proved difficult to
obtain optimal error estimates \cite{HR12, DH16}. The latter case, on the
other hand leads to the usual
consistency and conditioning problems of penalty methods. Another approach proposed by Hild and
Renard \cite{HR10} is to use a
stabilized Lagrange-multiplier in the spirit of Barbosa and Hughes \cite{BH92}.
As a further development one may use the reformulation of the contact condition
\begin{equation}\label{cond1}
\lambda = - \gamma^{-1} [u - \gamma \lambda ]_+
\end{equation}
where $[x]_+ = \max(0, x)$, introduced by Alart and Curnier \cite{AC91} in an augmented Lagrangian
framework.
Using the close relationship between the Barbosa--Hughes met\-hod and
Nitsche's method \cite{Nit71} discussed by Stenberg \cite{Sten95}, this method was
then further developed in the elegant Nitsche-type formulation for the
Signorini problem
introduced by Chouly, Hild and Renard  \cite{CH13b,CHR15}. In these works
optimal error estimates for the above model problem were obtained for
the first time. 

Using the notation $\left<u,v \right>_C$ for the $L_2$ inner product over $C$ we have
in the case of the Signorini problem \eqref{signorini} that $C$ corresponds
to $\Gamma_C$, the boundary part where the contact conditions hold and
\[
\left<u,v \right>_C := \int_{\Gamma_C} uv  ~\mbox{d}s ,
\] 
while for the obstacle
problem  \eqref{obstacle} $C \equiv \Omega$ and 
\[
\left<u,v \right>_C := \int_{\Omega} uv  ~\mbox{d}x.
\] 
Finally, we define $\| v\|_C := \left<v,v \right>_C^{1/2}$.
With this notation, the augmented Lagrangian multiplier seeks stationary points to the functional
\begin{equation}\label{functional}
\mathfrak{F} (u,\lambda) := \frac12 a(u,u) + \frac{1}{2\gamma}\| [u - \gamma \lambda ]_+\|^2_C -\frac{\gamma}{2}\|\lambda\|_C^2 ,
\end{equation}
cf. Alart and Curnier \cite{AC91}.
Observe that formally the stationary points are given by $(u,\lambda)$ such
that
\begin{equation}\label{eq:stat_point1}
\begin{array}{rcl}
a(u,v) + \left<\gamma^{-1}  [u - \gamma \lambda ]_+, v \right>_C &=&
                                                         (f,v)_\Omega\\
\left< \lambda + \gamma^{-1} [u - \gamma \lambda ]_+, \mu \right>_C&=& 0
\end{array}
\end{equation}
for all $(v,\mu)$, or by substituting the second equation in the first
\begin{equation}\label{eq:stat_point2}
\begin{array}{rcl}
a(u,v) - \left< \lambda, v \right>_C &=&
                                                         (f,v)_\Omega\\
\left< \gamma \lambda + [u - \gamma \lambda ]_+, \mu \right>_C&=& 0.
\end{array}
\end{equation}
Observing now that the contact condition equally well can be written
on the primal variable as $u = -[\gamma \lambda - u]_+$ we get by
adding and subtracting $u$ in the second equation of
\eqref{eq:stat_point2}
\begin{equation}\label{eq:stat_point3}
\begin{array}{rcl}
a(u,v) - \left< \lambda, v \right>_C &=&
                                                         (f,v)_\Omega\\
\left<u + [\gamma \lambda - u ]_+, \mu \right>_C&=& 0.
\end{array}
\end{equation}
In this paper we consider two different method, resulting from this
approach. The first formulation is the straightforward discretization
of \eqref{eq:stat_point1} resulting in a method that gives the
stationary points of the functional \eqref{functional} over the
discrete spaces. The second formulation is a discretization of
\eqref{eq:stat_point3} that is chosen for its closeness to the
standard Lagrange multiplier method for the imposition of Dirichlet
boundary conditions.

We consider discretization either with a choice
of approximation spaces that results in a stable approximation, or a
choice that is stable only with an added stabilizing term. Here we
consider stabilization based on the interior
penalty stabilized Lagrange multiplier method introduced by Burman and
Hansbo \cite{BH10a} for solving elliptic interface problems. The
appeal of this latter approach is that we may use the lowest order 
approximation spaces where the displacement is piecewise linear and
the multiplier constant per element (or element side). When
considering the Signorini problem \eqref{signorini} 
these spaces match the regularity of the physical problem perfectly and therefore in some sense is the
most economical choice.  Contact problems also present non trivial
quadrature problems so that in practice it can be very difficult to
integrate the terms of the formulation to a sufficient accuracy to get
optimal accuracy when
higher order interpolations are used. Herein we will assume that
integration can be performed exactly on the interface between the
contact and non-contact subdomain.

For an alternative stabilization method of Barbosa--Hughes type in the augmented Lagrangian setting, see Hansbo, Rashid, and Salomonsson \cite{HRS16}.

We assume that $\{ \mathcal{T}\}_h$ is a family of quaisuniform meshes of $\Omega$,
such that the mesh is fitted to the zone $C$. That is $C$ is a subset
of boundary element faces of simplices $K$ such that $K \cap \Gamma_C \ne \emptyset$, $F:= \partial K \cap \Gamma_C$ 
$\mathcal{T}_C:=\{F\} $, $C :=
\cup_{F \in \mathcal{T}_C}$ with $C \subset \mathbb{R}^{d-1}$ for the
Signorini problem. For the obstacle
problem $C$ is defined by $\Omega$ and hence $\cup_{K \in
  \mathcal{T}}=: C \subset \mathbb{R}^{d}$ and $\mathcal{T}_C \equiv
\mathcal{T}$. Below we will denote the elements of $\mathcal{T}_C$ by
$K$ in both cases.
We
define $V_h$ to be the space of $H^1$-conforming
functions on $\mathcal{T}$, satisfying the homogeneous boundary
condition of $\Gamma_D$.
\[
V^k_h := \{v_h \in H^1(\Omega): v\vert_{\Gamma_D} = 0; v\vert_K \in
\mathbb{P}_k(K),\, \forall K \in \mathcal{T} \},
\]
where $\mathbb{P}_k(K)$ denotes the set of polynnomials of order less
than or equal to $k$ on the simplex $K$.
Whenever the superscript is dropped we refer to the generic space of
order $k$.
For the multipliers we introduce the space $\Lambda_h$ defined 
as the space piecewise polynomials of order less than or equal to $l$ defined on 
$\OmegaC$. 
\[
\Lambda^{l}_h := \{\mu_h \in L^2(C): \mu_h \vert_{K} \in
\mathbb{P}_{l} (K), \forall K \in \mathcal{T}_C\}.
\]
Whenever $l=k-1$ the superscript is dropped.
We will detail the case of discontinuous multipliers, but all
arguments below are valid also in case the Lagrange multiplier is
approximated in the space of continuous functions, $\Lambda^l_h \cap
C^0(\OmegaC)$, $l \ge 1$, in this case no stabilization is
necessary. The differences in the analysis will be outlined. 

Both
formulations that we consider herein take the form: Find $(u_h,\lambda_h) \in
V_h \times \Lambda_h$ such that
\begin{equation}\label{FEM}
a(u_h,v_h) + b[(u_h,\lambda_h);(v_h,\mu_h)] =(f,v_h)_\Omega \quad \forall (v_h,\mu_h)
\in V_h \times \Lambda_h
\end{equation}
where $(\cdot,\cdot)_\Omega$ denotes the standard $L^2$-inner product,
$a(u_h,v_h) := (\nabla u_h,\nabla v_h)_{\Omega}$ and the methods are distinguished by
the definition of the form $b[\cdot;\cdot]$ that acts only in the zone
where contact may occur. The stabilization will be included in the
form $b[\cdot;\cdot]$. As already pointed out this term is
necessary if the choice $V_h \times \Lambda_h$, does not satisfy the inf-sup
condition. In our framework, this is the case where the multiplier is
discontinuous over element faces. 
In this paper we will focus on a stabilization
using a penalty on the jumps over element faces of the multiplier
variable in the spirit of \cite{BH10a,BH10b},
\begin{equation}\label{def:s}
s(\lambda_h,\mu_h):=\sum_{F \in \mathcal{F}_{C}} \delta \gamma \int_F h
\jump{\lambda_h}\jump{\mu_h} ~\mbox{d}s,
\end{equation}
where $\delta>0$ is a parameter, $\jump{x}\vert_F$ denotes
the jump of the quantity $x$ over the face $F$ and $\mathcal{F}_C$ denotes the set of interior element
faces of the elements in $\mathcal{T}_C$. The semi-norm associated with the
stabilization operator will be defined as $|\cdot|_s := s(\cdot,\cdot)^{\frac12}$.

We will also below use the compact notation
\[
A_h[(u_h,\lambda_h),(v_h,\mu_h)]:=a(u_h,v_h) + b[(u_h,\lambda_h);(v_h,\mu_h)]
\]
and the associated formulation, find $(u_h,\lambda_h) \in V_h \times
\Lambda_h$ such that
\begin{equation}\label{CompFEM}
A_h[(u_h,\lambda_h),(v_h,\mu_h)] = (f,v_h)_\Omega,\mbox{ for all }
(v_h,\mu_h) \in V_h \times
\Lambda_h.
\end{equation}
We will now specify two different choices of $b[\cdot;\cdot]$ leading
to two different Lagrange-multiplier methods. 
\newline

{\flushleft\bf FORMULATION 1}:
In the first formulation we use the original formula for the contact
condition proposed by Alart and Curnier, $\lambda = -\gamma^{-1} [ u-\gamma \lambda]_+$
\begin{align}\nonumber
b[(u_h,\lambda_h);(v_h,\mu_h)]:= {}& \left<
 \gamma^{-1}[u_h - \gamma \lambda_h]_+, v_h\right>_{C} \\ \nonumber
{}& +  
\left<
 \gamma^{-1}[u_h - \gamma \lambda_h]_+, \gamma  \mu_h\right>_{C} \\
{}&+\left<\gamma
  \lambda_h, \mu_h \right>_{C} 
+ s(\lambda_h,\mu_h)\label{stab_form0}
\end{align}
or, writing the nonlinearity as the derivative of a quadratic form,
and using the notation $P_{\gamma\pm}(u_h,\lambda_h):=\pm(u_h - \gamma \lambda_h)$
\begin{align}\nonumber
b[(u_h,\lambda_h);(v_h,\mu_h)]:={}&\left<
 \gamma^{-1}[P_{\gamma+}(u_h,\lambda_h)]_+,P_{\gamma+}(v_h,\mu_h)\right>_{C}\\
 {}& - \left<\gamma \mu_h, \lambda_h \right>_{C}
-  s(\lambda_h,\mu_h),\label{stab_form1}
\end{align}
with $\gamma>0$ a parameter to determine.
In this case the finite element formulation corresponds
to the approximate solutions of \eqref{eq:stat_point1} in the finite element space.
\newline

{\flushleft\bf FORMULATION 2}:
In the second formulation we use a reformulation of the contact
condition on the displacement variable, $u = -[\gamma \lambda -
u]_+$ to obtain the semi-linear form
\begin{align}\nonumber
b[(u_h,\lambda_h);(v_h,\mu_h)]:= {}& - \left<
  \lambda_h,v_h\right>_{C} +\left<
  \mu_h,u_h  \right>_{C} \\
{}& +  \left<
  \mu_h, [P_{\gamma-}(u_h,\lambda_h)]_+\right>_{C} + s(\lambda_h,\mu_h), \label{infsup_stabform1}
\end{align}
with $\gamma>0$ a parameter to determine.
In this case the finite element formulation corresponds
to the approximate solutions of \eqref{eq:stat_point3} in the finite element space.
\subsection{Alternative formulations}
In both formulation 1 and 2 above it is possible to derive an
alternative formulation of the same method using the relation
\[
[P_{\gamma-}(u_h,\lambda_h)]_+ = [P_{\gamma+}(u_h,\lambda_h)]_+-P_{\gamma+}(u_h,\lambda_h).
\]
Considering the form \eqref{stab_form1} and adding and
subtracting $P_{\gamma+}(u_h,\lambda_h)$ in the nonlinear term we have
the alternative form (omitting the stabilization term)
\begin{align}\nonumber
b[(u_h,\lambda_h);(v_h,\mu_h)]= {}&\left<
 \gamma^{-1}[P_{\gamma+}(u_h,\lambda_h)]_+,P_{\gamma+}(v_h,\mu_h)\right>_{C}
 - \left<\gamma \mu_h, \lambda_h \right>_{C} \\ \nonumber
= {}&\left<
 \gamma^{-1}([P_{\gamma+}(u_h,\lambda_h)]_+-P_{\gamma+}(u_h,\lambda_h)),P_{\gamma+}(v_h,\mu_h)\right>_{C}\\ \nonumber
{}&+\left<
 \gamma^{-1}P_{\gamma+}(u_h,\lambda_h),P_{\gamma+}(v_h,\mu_h)\right>_{C}
 - \left<\gamma \mu_h, \lambda_h \right>_{C} \\ \nonumber
 = {}& - \left<
  \lambda_h,v_h\right>_{C} +\left<
  \mu_h,u_h  \right>_{C} +\gamma^{-1} \left< u_h,v_h\right>_{C}\\
{}& +\left<
 \gamma^{-1}([P_{\gamma-}(u_h,\lambda_h)]_+,P_{\gamma+}(v_h,\mu_h)\right>_{C}.\label{eq:alt_2}
\end{align}
Similarly for formulation 2 we obtain in \eqref{infsup_stabform1}
omitting for simplicity the stabilization term
\begin{align}\nonumber
b[(u_h,\lambda_h);(v_h,\mu_h)]= {}& - \left<
  \lambda_h,v_h\right>_{C} +\left<
  \mu_h,u_h  \right>_{C} \\ \nonumber
{}& +  \left<
  \mu_h,
  [P_{\gamma-}(u_h,\lambda_h)]_++P_{\gamma+}(u_h,\lambda_h)-P_{\gamma+}(u_h,\lambda_h)\right>_{C}
\\ \nonumber
= {}& - \left<
  \lambda_h,v_h\right>_{C} +\gamma \left<
  \mu_h,\lambda_h \right>_{C} 
  \\ {}& +
 \left<
  \mu_h,
  [P_{\gamma+}(u_h,\lambda_h)]_+\right>_{C}.\label{eq:alt_1}
\end{align}
We see that this semi-linear form corresponds to a discretization of
\eqref{eq:stat_point2}.

The methods defined by \eqref{infsup_stabform1}
and \eqref{eq:alt_1} or \eqref{stab_form1} and \eqref{eq:alt_2}
respectively are equivalent, but if during the solution process the linear
and nonlinear parts are separated in the nonlinear solver, one can
expect the different formulations to have different behavior and give
rise to different sequences of approximations in the iterative
procedure.

\section{Technical results}
Here we will collects some useful elementary results. First recall the
following inverse inequalities and trace inequalities (for a proof see, e.g.,
\cite{Arn82})
\begin{equation}\label{inverse}
\|\nabla u_h\|_K \leq C_ i h^{-1} \|u_h\|_K, \quad \forall u_h \in
V_h 
\end{equation}
\begin{equation}\label{trace}
\|u\|_{\partial K} \leq C_T (h^{-\frac12} \|u\|_K + h^{\frac12}
\|\nabla u\|_K), \quad \forall u \in H^1(K)
\end{equation}
\begin{equation}\label{trace_disc}
\|u_h\|_{\partial K} \leq
C_T h^{-\frac12} \|u_h\|_K,\quad \forall u_h \in
V_h
\end{equation}
Similar inequalities hold for functions in $\Lambda_h$ and we will use
them without making any distinction between the two cases.
We let $\pi_0:L^2(C) \rightarrow
\Lambda^0_h$ denote the standard $L^2$ projection onto $\Lambda^0_h$ and we observe
that there holds, by standard approximation properties of the
projection onto constants (and a trace inequality in the case of
lateral contact),
\[
\|(1-\pi_0) v_h\|_C \leq c_0 h^{s} \|\nabla v_h\|_{\Omega}
\]
with $s=1$ for the Obstacle problem where $C \subset \Omega$ and
$s=\tfrac12$ for the Signorini problem where $C \subset \partial
\Omega$. Similarly we define $\pi_l:L^2(C) \rightarrow
\Lambda^l_h \cap C^0(\bar C)$ and note that the corresponding
inequality holds for $\pi_1$
\[
\|(1-\pi_1) v_h\|_C \leq c_1 h^{s} \|\nabla v_h\|_{\Omega}.
\]
We also observe for future reference that $\| u\|_C \leq
C \|u\|_{H^1(\Omega)}$ in both cases. 

For the analysis below it is useful to introduce an indicator function
for the contact domain $C$ defined on the space $V_h$.
Let $\xi_h$ denote a finite element function such
that $\xi_h \in V^1_h$ with $\xi_h(x) = 0$ for nodes in $(\bar \Omega
\setminus \bar C) \cup \bar \Gamma_D$, that is nodes outside the contact zone. For all other nodes
$x_i \in K$ with $K \subset \mathcal{T}_C$, $x_i \not \in \bar \Gamma_D$, $\xi_h(x_i)=1$. The
following bound is well known, see for instance \cite{BS16}
\begin{equation}\label{eq:xicont}
\exists c_\xi\in \mathbb{R}^+ \mbox{ such that } c_\xi \|\mu_h\|_C
\leq \|\xi_h^{\frac12} \mu_h\|_C,\quad  \forall \mu_h \in
\Lambda^l_h, \quad l \ge 0.
\end{equation}

Stability of the method will rely on the satisfaction of the following
assumption:
\begin{assumption}\label{ass:infsup}
There exists $c_D \in [0,1)$ such that for all $\mu_h \in \Lambda_h$ there holds
\[
\|(1-\xi_h)  \mu_h \|_{C} \leq c_D \|\mu_h\|_{C}.
\]
\end{assumption}
The assumption 
holds whenever there exists a
quadrature rule on the simplex with positive weights and only interior
quadrature points. This is easily shown by observing that
\begin{align*}
\|(1-\xi_h)  \mu_h \|_{C}^2 &= \sum_{K \in \mathcal{T}_C} \sum_{i \in
  \mathcal{Q}_K}
(1-\xi_h(x_i))^2  \mu_h(x_i)^2 \omega_i 
\\
&\leq
\max_{K\in \mathcal{T}_C}( \max_{i \in \mathcal{Q}_K} (1-\xi_h(x_i))^2 \sum_{K \in \mathcal{T}}
\sum_{i \in \mathcal{Q}_K}  (\mu_h(x_i))^2\omega_i 
\\
&= c_D^2  \|\mu_h\|_{C}^2 
\end{align*}
where $\mathcal{Q}_K$ is a set of integers indexing the quadrature
points in $K$ and $$c_D \equiv \max_{K\in \mathcal{T}_C}( \max_{i \in
  \mathcal{Q}_K} (1-\xi_h(x_i))^2.$$ Since $\xi_h$ is zero only on the
boundary of $C$ and no points $x_i \in \mathcal{Q}_K$ are on the
boundary we conclude that $c_D<1$.

This is a very mild condition, on triangles it has been showed to hold
at least up to integration degree $23$, see \cite{TWB05, ZCL09}. It
follows that for the Signorini problem in three dimensions and the
obstacle problem in two space dimensions the analysis holds at least
up to $k=12$. For the lowest order case where the multipliers are constant
per element it is straightforward to show that  $c_D \leq 1/2$ if $C \subset \mathbb{R}^2$ and $c_D \leq \tfrac13$
if $C \subset \mathbb{R}^3$.

\begin{lemma}\label{lem:monotone}
Let $a,b \in \mathbb{R}$; then there holds
\[
([a]_+-[b]_+)^2 \leq ([a]_+ - [b]_+) (a - b),
\]
\[
|[a]_+-[b]_+| \leq |a-b|.
\]
\end{lemma}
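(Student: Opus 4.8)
The plan is to reduce everything to the elementary one-variable analysis of the positive-part function $t\mapsto[t]_+=\max(0,t)$, treating the two inequalities separately but with the second following almost immediately from the first.

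For the first inequality, I would argue by cases on the signs of $a$ and $b$. If both $a\le 0$ and $b\le 0$, then $[a]_+=[b]_+=0$ and both sides vanish, so the inequality holds trivially as $0\le 0$. If both $a>0$ and $b>0$, then $[a]_+=a$ and $[b]_+=b$, so the left-hand side is $(a-b)^2$ and the right-hand side is $(a-b)(a-b)=(a-b)^2$, giving equality. The only genuine case is the mixed one, say $a>0\ge b$ (the reverse case is symmetric under swapping $a\leftrightarrow b$, which leaves both sides invariant). Then $[a]_+=a$, $[b]_+=0$, so the left-hand side equals $a^2$ and the right-hand side equals $a(a-b)=a^2-ab$. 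Since $a>0$ and $b\le 0$ we have $-ab\ge 0$, hence $a^2\le a^2-ab$, which is the claim. This exhausts all cases.

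For the second inequality, I would note that $[a]_+-[b]_+$ and $a-b$ always have the same sign (or $[a]_+-[b]_+$ is zero), which is already implicit in the first inequality: writing $x=[a]_+-[b]_+$ and $y=a-b$, the first inequality says $x^2\le xy$, so $x(x-y)\le 0$. If $x=0$ then $|[a]_+-[b]_+|=0\le|a-b|$ trivially. If $x\ne 0$, then $x$ and $x-y$ have opposite (weak) signs, which forces $|y|\ge|x|$; concretely, $x^2\le xy\le|x||y|$, and dividing by $|x|>0$ gives $|x|\le|y|$, i.e. $|[a]_+-[b]_+|\le|a-b|$. Alternatively one can observe directly that $[\cdot]_+$ is $1$-Lipschitz, but deriving it from the first inequality keeps the argument self-contained.

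The main point to be careful about is simply the bookkeeping of the case split and the symmetry reduction; there is no real obstacle here, as the statement is elementary. The one thing worth stating explicitly is that both sides of the first inequality are symmetric under the simultaneous exchange $a\leftrightarrow b$ (the left-hand side is a square and the right-hand side is a product of two differences that each change sign), so it genuinely suffices to treat $a>0\ge b$ among the mixed-sign configurations.
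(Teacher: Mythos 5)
Your proof is correct, but it is organized differently from the paper's. For the first inequality the paper does not split into cases at all: it expands $([a]_+-[b]_+)^2=[a]_+^2+[b]_+^2-2[a]_+[b]_+$ and compares term by term with $[a]_+a+[b]_+b-a[b]_+-[a]_+b$, using that $[a]_+^2=[a]_+a$, $[b]_+^2=[b]_+b$, and $[a]_+[b]_+\geq a[b]_+$, $[a]_+[b]_+\geq [a]_+b$; your exhaustive sign case analysis (with the symmetry reduction to $a>0\geq b$) reaches the same conclusion by more pedestrian but equally valid means. For the second inequality the roles are reversed: the paper argues by cases on the signs of $a$ and $b$, whereas you deduce it from the first inequality via $x^2\leq xy\leq |x|\,|y|$ with $x=[a]_+-[b]_+$, $y=a-b$, which is a nice economy — it makes the Lipschitz bound a corollary of the monotonicity bound rather than an independent verification, at the cost of being slightly less self-evident at a glance. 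Both routes are complete and elementary; nothing in your argument is missing or incorrect.
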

\begin{proof}
Expanding the left hand side of the expression we have
\[
[a]_+^2 +[b]_+^2 - 2 [a]_+[b]_+ \leq  [a]_+ a + [b]_+ b -  a [b]_+
-  [a]_+b =  ([a]_+ - [b]_+) (a - b).
\]
For the proof of the second claim, this is trivially true in case both $a$
and $b$ are positive or negative. If $a$ is negative and $b$ positive then
\[
|[a]_+-[b]_+| = |b| \leq |b-a|
\]
and similarly if $b$ is negative and $a$ positive
\[
|[a]_+-[b]_+| = |a| \leq |b-a|.
\]
\end{proof}
\begin{lemma}(Continuity of $b[\cdot;\cdot]$)\label{bcont}
The forms \eqref{infsup_stabform1} and \eqref{stab_form1} satisfy
\begin{align*}
&|b[(u_1,\lambda_1);(v,\mu)] - b[(u_2,\lambda_2);(v,\mu)]|
\\ 
&\qquad \leq
(\gamma^{-\frac12} \| (u_1-u_2)\|_{H^1(\Omega)} +
\gamma^{\frac12}\|\lambda_1-\lambda_2\|_C) (\gamma^{-\frac12} \|v\|_C
+ \gamma^{\frac12} \|\mu\|_C)
\\
&\qquad \qquad
+|\lambda_1-\lambda_2|_s|\mu|_s.
\end{align*}
\end{lemma}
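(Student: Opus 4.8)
The plan is to treat the two formulations separately, estimating the difference $b[(u_1,\lambda_1);(v,\mu)] - b[(u_2,\lambda_2);(v,\mu)]$ term by term. In both cases the form $b[\cdot;\cdot]$ consists of linear terms in $(u_h,\lambda_h)$, one Lipschitz-nonlinear term involving $[\cdot]_+$, and the stabilization term $s(\cdot,\cdot)$, which is bilinear. The stabilization contribution is immediate: by bilinearity and the Cauchy--Schwarz inequality in the $s$-inner product, $|s(\lambda_1,\mu) - s(\lambda_2,\mu)| = |s(\lambda_1-\lambda_2,\mu)| \le |\lambda_1-\lambda_2|_s |\mu|_s$, which accounts for the last term on the right-hand side. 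So the work is in the remaining terms, all of which must be bounded by the product factor $(\gamma^{-1/2}\|u_1-u_2\|_{H^1(\Omega)} + \gamma^{1/2}\|\lambda_1-\lambda_2\|_C)(\gamma^{-1/2}\|v\|_C + \gamma^{1/2}\|\mu\|_C)$.

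First I would handle the linear terms. For formulation 2, \eqref{infsup_stabform1}, these are $-\left<\lambda_h,v_h\right>_C + \left<\mu_h,u_h\right>_C$; their difference is $-\left<\lambda_1-\lambda_2,v\right>_C + \left<\mu,u_1-u_2\right>_C$, and Cauchy--Schwarz gives $\|\lambda_1-\lambda_2\|_C\|v\|_C + \|\mu\|_C\|u_1-u_2\|_C$; inserting the compensating powers $\gamma^{1/2}\gamma^{-1/2}$ on each product and using $\|u_1-u_2\|_C \le C\|u_1-u_2\|_{H^1(\Omega)}$ (recorded above) yields the claimed bound for these two terms. For the analogous linear pieces in formulation 1 (the $\left<\gamma\lambda_h,\mu_h\right>_C$ term, and after expansion the $\left<\gamma^{-1}u_h,v_h\right>_C$, $-\left<\lambda_h,v_h\right>_C$, $\left<\mu_h,u_h\right>_C$ terms coming from $\left<\gamma^{-1}P_{\gamma+}(u_h,\lambda_h),P_{\gamma+}(v_h,\mu_h)\right>_C$) the same Cauchy--Schwarz estimates work, with the $\gamma$ powers arranged so that, e.g., $\gamma^{-1}\left<u_1-u_2,v\right>_C \le (\gamma^{-1/2}\|u_1-u_2\|_C)(\gamma^{-1/2}\|v\|_C)$ and $\gamma\left<\lambda_1-\lambda_2,\mu\right>_C \le (\gamma^{1/2}\|\lambda_1-\lambda_2\|_C)(\gamma^{1/2}\|\mu\|_C)$. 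Throughout one uses $ab+cd \le (a+c)(b+d)$ for nonnegative reals to collect the separate Cauchy--Schwarz outputs into the single product on the right.

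The nonlinear term is the main point, though it is not hard given Lemma~\ref{lem:monotone}. In formulation 2 it is $\left<\mu_h,[P_{\gamma-}(u_h,\lambda_h)]_+\right>_C$, so its difference is $\left<\mu,[P_{\gamma-}(u_1,\lambda_1)]_+ - [P_{\gamma-}(u_2,\lambda_2)]_+\right>_C$. By the second inequality of Lemma~\ref{lem:monotone} (applied pointwise on $C$, with $a = P_{\gamma-}(u_1,\lambda_1)$, $b = P_{\gamma-}(u_2,\lambda_2)$) and Cauchy--Schwarz,
\[
\bigl|\left<\mu,[P_{\gamma-}(u_1,\lambda_1)]_+ - [P_{\gamma-}(u_2,\lambda_2)]_+\right>_C\bigr|
\le \|\mu\|_C\,\|P_{\gamma-}(u_1,\lambda_1) - P_{\gamma-}(u_2,\lambda_2)\|_C,
\]
and since $P_{\gamma-}(u_1,\lambda_1)-P_{\gamma-}(u_2,\lambda_2) = -(u_1-u_2) + \gamma(\lambda_1-\lambda_2)$, the triangle inequality gives $\|P_{\gamma-}(u_1,\lambda_1)-P_{\gamma-}(u_2,\lambda_2)\|_C \le \|u_1-u_2\|_C + \gamma\|\lambda_1-\lambda_2\|_C$. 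Multiplying by $\|\mu\|_C$ and redistributing $\gamma$ powers ($\|u_1-u_2\|_C\|\mu\|_C = \gamma^{-1/2}\|u_1-u_2\|_C\cdot\gamma^{1/2}\|\mu\|_C$ and $\gamma\|\lambda_1-\lambda_2\|_C\|\mu\|_C = \gamma^{1/2}\|\lambda_1-\lambda_2\|_C\cdot\gamma^{1/2}\|\mu\|_C$) absorbs this into the target bound after using $\|u_1-u_2\|_C \le C\|u_1-u_2\|_{H^1(\Omega)}$. The nonlinear term in formulation 1 is $\left<\gamma^{-1}[P_{\gamma+}(u_h,\lambda_h)]_+, P_{\gamma+}(v_h,\mu_h)\right>_C$; its difference is $\gamma^{-1}\left<[P_{\gamma+}(u_1,\lambda_1)]_+ - [P_{\gamma+}(u_2,\lambda_2)]_+, v - \gamma\mu\right>_C$, and the same argument — Lemma~\ref{lem:monotone}, Cauchy--Schwarz, triangle inequality on $P_{\gamma+}$ differences — gives a bound by $\gamma^{-1}(\|u_1-u_2\|_C + \gamma\|\lambda_1-\lambda_2\|_C)(\|v\|_C + \gamma\|\mu\|_C)$, which upon expanding and regrouping the four products with balanced $\gamma$ powers is exactly $(\gamma^{-1/2}\|u_1-u_2\|_C + \gamma^{1/2}\|\lambda_1-\lambda_2\|_C)(\gamma^{-1/2}\|v\|_C + \gamma^{1/2}\|\mu\|_C)$. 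Collecting all pieces and using $ab+cd \le (a+c)(b+d)$ once more completes the estimate; the only mild subtlety is the uniform bookkeeping of the $\gamma^{\pm 1/2}$ factors so that every Cauchy--Schwarz output fits under the single product, but no new idea beyond Lemma~\ref{lem:monotone} is needed.
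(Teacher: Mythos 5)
Your proof is correct and follows exactly the route the paper intends: the paper's own proof is the single sentence "Immediate by the definitions of $b[\cdot;\cdot]$, the second inequality of Lemma \ref{lem:monotone}, the Cauchy--Schwarz inequality and the assumptions on $\|\cdot\|_C$," and your term-by-term expansion (Lipschitz bound on $[\cdot]_+$, Cauchy--Schwarz for the linear and stabilization pieces, balanced $\gamma^{\pm 1/2}$ bookkeeping, $\|\cdot\|_C\lesssim\|\cdot\|_{H^1(\Omega)}$) is precisely what that sentence compresses. The only caveat, shared equally by the paper, is that summing the separate contributions really yields the right-hand side up to a multiplicative constant rather than with constant one, which is harmless for the lemma's only use (continuity in the Brouwer fixed-point argument).
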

\begin{proof}
Immediate by the definitions of $b[\cdot;\cdot]$, the second
inequality of Lemma \ref{lem:monotone}, the Cauchy-Schwarz inequality
and the assumptions on $\|\cdot\|_C$.
\end{proof}

Next we define the local averaging interpolation operator $I_{cf}:\Lambda_h
\rightarrow \Lambda_h \cap C^0(\OmegaC)$ such that for every
Lagrangian node $x_i \in \mathcal{T}_C$
\[
 I_{cf} \lambda_h(x_i) = \kappa_i^{-1} \sum_{K:x_i \in K}
 \lambda_h(x_i),
\]
where $\kappa_i$ denotes the cardinality of the set $\{K \subset \mathcal{T}_C:x_i \in K\}$. Observe that since $\xi_h \in V_h^1$, for any $\mu_h \in \Lambda_h$ there are functions
$R_\mu$ in
$V_h$ such that $R_\mu\vert_{C} = I_{cf} \xi_h \mu_h$. We recall
the following interpolation result between discrete spaces:
\begin{proposition}\label{lem:jump_cont}
For all $\mu_h \in
\Lambda_h$ there holds
\[
\|\xi_h \mu_h -  I_{cf}(\xi_h \mu_h)\|_{C} \leq c_s \|h^{\frac12}
\jump{\mu_h}\|_{\mathcal{F}_C}, \quad \|I_{cf}\mu_h\|_{C} \leq
c_{cf} \|\mu_h\|_C
\]
and
\[
|\mu_h|^2_s \leq C \delta \|\mu_h\|^2_C.
\]
\end{proposition}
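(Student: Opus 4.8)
The plan is to treat the three estimates in turn, each by a standard scaling/patch argument on the reference element. For the first estimate, I would note that $\xi_h \mu_h - I_{cf}(\xi_h \mu_h)$ vanishes identically on any element $K$ all of whose faces are non-contact boundary faces or lie in $\partial\Omega$; more to the point, on a given $K$ the difference is controlled by the jumps of $\xi_h\mu_h$ across the faces in $\mathcal{F}_C$ adjacent to $K$, because the nodal values of $I_{cf}(\xi_h\mu_h)$ are weighted averages of the nodal values of $\xi_h\mu_h$ over the patch of elements meeting at each node. Passing to the reference element, $\|\xi_h\mu_h - I_{cf}(\xi_h\mu_h)\|_K$ is bounded by a constant (depending only on the reference simplex and the maximal valence of a node, hence on shape-regularity) times the sum over faces $F$ adjacent to $K$ of $\|\jump{\xi_h\mu_h}\|_F$; since $\xi_h \in V_h^1 \subset C^0$, the jump of the product reduces to $\xi_h|_F \jump{\mu_h}$, and $|\xi_h|\le 1$, so $\|\jump{\xi_h\mu_h}\|_F \le \|\jump{\mu_h}\|_F$. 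Multiplying by $h^{1/2}$ and summing over $K \in \mathcal{T}_C$, using that each face is shared by at most two elements and the meshes are quasiuniform, yields the claimed bound with a constant $c_s$ depending only on shape-regularity.

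For the second estimate, the $L^2$-stability of $I_{cf}$, I would again work element by element: on $K$, $\|I_{cf}\mu_h\|_K^2$ is, after scaling, bounded by a constant times the sum of the squares of the nodal values $(I_{cf}\mu_h)(x_i)$ over the Lagrangian nodes $x_i\in K$; each such nodal value is an average $\kappa_i^{-1}\sum_{K':x_i\in K'}\mu_h(x_i)$ of at most $\kappa_i$ values, so by Cauchy--Schwarz its square is bounded by $\kappa_i^{-1}\sum_{K':x_i\in K'}\mu_h(x_i)^2$. Summing over $i$ and over $K\in\mathcal{T}_C$, each term $\mu_h(x_i)^2$ from an element $K'$ is picked up at most $\kappa_i$ times, cancelling the $\kappa_i^{-1}$, so we obtain $\sum_K \|I_{cf}\mu_h\|_K^2 \le c_{cf}^2 \sum_K \|\mu_h\|_K^2 = c_{cf}^2\|\mu_h\|_C^2$, again by a scaling equivalence between $\|\cdot\|_K$ and the discrete $\ell^2$-norm of nodal values on each element. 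The constant $c_{cf}$ depends only on shape-regularity through the maximal node valence.

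For the third estimate, the bound on the stabilization seminorm, I would start from the definition \eqref{def:s}: $|\mu_h|_s^2 = \sum_{F\in\mathcal{F}_C}\delta\gamma\|h^{1/2}\jump{\mu_h}\|_F^2$. On each interior face $F$ shared by $K_1,K_2$, write $\jump{\mu_h} = \mu_h|_{K_1} - \mu_h|_{K_2}$ on $F$, apply the triangle inequality and the discrete trace inequality \eqref{trace_disc} (valid for $\Lambda_h$ by the remark following it) to get $\|\jump{\mu_h}\|_F \le C_T h^{-1/2}(\|\mu_h\|_{K_1} + \|\mu_h\|_{K_2})$; hence $\|h^{1/2}\jump{\mu_h}\|_F^2 \le C(\|\mu_h\|_{K_1}^2 + \|\mu_h\|_{K_2}^2)$. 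Summing over $F\in\mathcal{F}_C$ and using that each element has a bounded number of faces, $\sum_F (\|\mu_h\|_{K_1}^2+\|\mu_h\|_{K_2}^2) \le C\sum_{K\in\mathcal{T}_C}\|\mu_h\|_K^2 = C\|\mu_h\|_C^2$, giving $|\mu_h|_s^2 \le C\delta\gamma\|\mu_h\|_C^2$. (The statement omits the harmless $\gamma$ factor, presumably absorbing it into $C$; if one wants it literally as written one simply keeps $\gamma$ inside the constant, or notes the normalization used elsewhere.)

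I do not expect any genuine obstacle here: all three bounds are classical consequences of norm equivalence on the reference element, shape-regularity, and inverse/trace inequalities. The only point requiring a little care is bookkeeping the node-valence and face-sharing multiplicities so that the final constants depend only on the shape-regularity of the mesh family and not on $h$; and making sure, in the first estimate, to use the continuity of $\xi_h$ so that the jump of the product collapses to $\xi_h\jump{\mu_h}$ with $\|\xi_h\|_{L^\infty}\le 1$, which is exactly what produces the clean bound by $\|h^{1/2}\jump{\mu_h}\|_{\mathcal{F}_C}$ rather than by a norm of $\mu_h$ itself.
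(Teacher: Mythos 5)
Your plan is correct and coincides in substance with the paper's (very terse) proof: the paper simply cites \cite[Lemma 5.3]{BE07} for the first bound, whose standard proof is exactly the nodal-averaging/patch-scaling argument you outline (including the key reduction $\jump{\xi_h\mu_h}=\xi_h\jump{\mu_h}$ with $0\le\xi_h\le 1$, though the per-element bound really involves jumps over all faces in the node patches of $K$, not only faces adjacent to $K$ -- harmless after summation), it treats the $L^2$-stability of $I_{cf}$ as immediate, and it obtains the stabilization bound by applying the discrete trace inequality \eqref{trace_disc} facewise, just as you do. Your remark on the factor $\gamma$ is also apt: the trace-inequality argument actually gives $|\mu_h|_s^2\lesssim \delta\gamma\|\mu_h\|_C^2$, which implies the stated bound since $\gamma=\gamma_0h^{2s}$ is bounded.
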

\begin{proof}
For a proof of the first inequality we refer to \cite[Lemma 5.3]{BE07}. The
second inequality is immediate by applying the trace inequality
\eqref{trace_disc} to each term in the definition \eqref{def:s} 
of $s(\cdot,\cdot)$.
\end{proof}
\begin{lemma}\label{lem:lift}
Let $r_h \in \Lambda_{h} \cap C^0(\OmegaC)$, then there exists $R_h \in V_h$ such that
$R_h\vert_{C} = \xi_h r_h$ and $\|R_h\|_{H^1(\Omega)}+\|R_h\|_C \leq C_R h^{-s} \|r_h\|_C$,
with $s=1/2$ when $C$ is a subset of $\partial \Omega$ and $s=1$ when 
$C$ is a subset of $\Omega$.
\end{lemma}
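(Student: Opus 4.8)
The statement asks us to construct, given a continuous discrete multiplier $r_h \in \Lambda_h \cap C^0(\OmegaC)$, a finite element function $R_h \in V_h$ that agrees with $\xi_h r_h$ on $C$ and whose $H^1(\Omega)$-norm (plus its $C$-norm) is controlled by $h^{-s}\|r_h\|_C$. The plan is to build $R_h$ nodally: for every Lagrangian node $x_i$ of $\mathcal{T}$ that lies in $\bar C$ set $R_h(x_i) := \xi_h(x_i) r_h(x_i)$, and for every remaining node of $\mathcal{T}$ set $R_h(x_i):=0$. This is well defined because $r_h$ is continuous on $\bar C$ and $\xi_h \in V_h^1$, so the product $\xi_h r_h$ is single-valued at each node; it vanishes at all nodes on $\bar \Gamma_D$ and at all nodes of $\partial C \setminus \Gamma_C$ by the defining property of $\xi_h$, so $R_h$ lies in $V_h$ (it respects the homogeneous Dirichlet condition) and $R_h\vert_C = \xi_h r_h$ by construction.

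The remaining task is the norm estimate. First I would bound $\|R_h\|_C = \|\xi_h r_h\|_C \le \|r_h\|_C$ since $0\le \xi_h \le 1$. For the $H^1$-part, the key point is that $R_h$ is nonzero only on elements $K$ that touch $\bar C$: for the obstacle problem these are all of $\mathcal{T}$, and for the Signorini problem these are the one layer of simplices having a face on $\Gamma_C$. On each such $K$, $R_h\vert_K$ is a polynomial determined by its nodal values, which are bounded in modulus by $\max_i |r_h(x_i)|$ on the neighboring contact faces; by norm equivalence on the reference element and scaling, $\|R_h\|_{K} \le C h^{s} \|r_h\|_{\omega_K \cap C}$ and, crucially, the inverse inequality \eqref{inverse} gives $\|\nabla R_h\|_K \le C_i h^{-1}\|R_h\|_K \le C h^{s-1}\|r_h\|_{\omega_K\cap C}$, where $\omega_K$ is the patch of $K$. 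Summing the squares over the (finitely overlapping) relevant $K$ yields $\|\nabla R_h\|_\Omega \le C h^{s-1}\|r_h\|_C$, and since $h^{s-1}\le h^{-s}$ for $h\le 1$ (indeed $s=1$ gives $h^0$, $s=1/2$ gives $h^{-1/2}$), combined with the trivially smaller term $\|R_h\|_\Omega \le C h^s \|r_h\|_C$, we obtain the claimed bound $\|R_h\|_{H^1(\Omega)} + \|R_h\|_C \le C_R h^{-s}\|r_h\|_C$.

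The one slightly delicate point, which I would state carefully rather than belabor, is the passage from the nodal values of the \emph{$(d-1)$-dimensional} datum $r_h$ on the contact faces to the \emph{$d$-dimensional} norm on the simplices $K$ in the Signorini case: here one uses that the relevant nodes of $K$ are exactly the nodes lying on its face $F = \partial K \cap \Gamma_C$ (plus interior nodes, which for the lift we are free to set to zero or interpolate, as long as the construction is local), so that a reference-element norm-equivalence argument on $\hat K$ bounds $\|R_h\|_{\hat K}$ by the Euclidean norm of the nodal vector, which in turn is comparable to $\|r_h\|_{\hat F}$, and scaling back introduces the factor $h^{d/2}$ versus $h^{(d-1)/2}$, i.e.\ the extra $h^{1/2}$ that produces $s=1/2$. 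For the obstacle problem $C=\Omega$ and $s=1$ and no such dimensional mismatch occurs, so that case is even simpler. I expect the main obstacle to be purely bookkeeping: making the local patch arguments and the finite-overlap summation precise for both geometric configurations simultaneously, while keeping the dependence on $h$ and on shape-regularity transparent; there is no conceptual difficulty, only the need to be careful about which nodes carry the data.
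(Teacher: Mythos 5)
Your construction and estimates follow essentially the same route as the paper: define $R_h$ by the nodal values of $\xi_h r_h$ on $\mathcal{T}_C$ and zero elsewhere, bound $\|\nabla R_h\|_K$ by the inverse inequality \eqref{inverse}, and in the Signorini case pass from $\|R_h\|_K$ to the face norm by a scaling/inverse-trace argument (the paper cites \cite[Lemma 3.1]{BE07} for exactly the bound $\|R_h\|_K \leq C h^{1/2}\|R_h\|_{\partial K\cap C}$ that you rederive on the reference element). One slip: your intermediate claim $\|R_h\|_K \leq C h^{s}\|r_h\|_{\omega_K\cap C}$ is only correct for the codimension-one case $s=1/2$; in the obstacle case both norms are $d$-dimensional, so the correct scaling is $\|R_h\|_K \leq C\|r_h\|_{\omega_K}$ (no factor $h$), and the subsequent gradient bound is $C h^{-1}\|r_h\|_C = C h^{-s}\|r_h\|_C$ rather than the $O(1)$ bound your $h^{s-1}$ would suggest (which cannot hold, since $\nabla \xi_h = O(h^{-1})$ near $\partial C$). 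Since the lemma only requires $h^{-s}$, the conclusion is unaffected once the exponent is corrected.
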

\begin{proof}
Define $R_h$
so that $R_h(x) = \xi_h r_h(x)$ for all nodes $x$ in $\mathcal{T}_C$ and $R_h(x)=0$ for
all other nodes $x$ in the mesh.
First consider the case when $C$ is a subset of the bulk domain
$\Omega$. Then, using an inverse inequality,
\[
\|\nabla R_h\|_{\Omega} \leq C_i h^{-1} \|r_h\|_{\Omega} = C_i h^{-1} \|r_h\|_{\Omega} = C_i h^{-1} \|r_h\|_{C}. 
\]
In the case $C$ is a subset of the boundary of $\Omega$ we observe that
\[
\|\nabla R_h\|_{\Omega} = \left(\sum_{K \subset \mathcal{T}: \partial
      K \cap C \ne \emptyset}
\|\nabla R_h\|_{K}^2 \right)^{\frac12}  \leq
\left(\sum_{K \subset \mathcal{T}:  \partial K \cap C \ne \emptyset}
h^{-2} \|R_h\|_{K}^2 \right)^{\frac12}.
\]
Using that $R_h$ is defined by the nodes in $C$, combined with the
shape regularity of the mesh, we may use the following inverse trace
inequality \cite[Lemma 3.1]{BE07} on every $K: \partial K \cap C \ne \emptyset$,
\[
\|R_h\|_{K} \leq C h^{\frac12} \|R_h\|_{\partial K \cap C}.
\]
It follows, since $R_h\vert_C = \xi_h r_h$, that
\[
\|\nabla R_h\|_{\Omega} \leq C h^{-1/2} \|R_h\|_{C} \leq  C h^{-1/2} \|r_h\|_{C}.
\]
\end{proof}
\section{Existence of unique discrete solution}
In the previous works on Nitsche's method for contact problems \cite{CH13b,CHR15} existence and uniqueness has
been proven by using the monotonicity and hemi-continuity of the
operator. Here we propose a different approach where we use
Brouwer's fixed point theorem to establish existence and the 
monotonicity of the nonlinearity for uniqueness. To this end we
introduce the finite dimensional nonlinear system corresponding to the
formulation \eqref{FEM}. 

Let $M:= N_V + N_\Lambda$, where $N_V$ and $N_\Lambda$ denote the number of
degrees of freedom of $V_h$ and $\Lambda_h$ respectively. Then define
$U,V \in \mathbb{R}^M$, where $U = \{u_i\}_{i=1}^{N_V} \cup \{\lambda_i\}_{i=1}^{N_\Lambda}$, $V=\{v_i\}_{i=1}^{N_V} \cup \{\mu_i\}_{i=1}^{N_\Lambda}$,
where $\{u_i\}, \{v_i\}$ and $\{\lambda_i\},\{v_i\}$ denote the vectors of unknowns
associated to the basis functions of $V_h$ and $\Lambda_h$ respectively.

Consider the mapping $G:\mathbb{R}^M \mapsto \mathbb{R}^M$ defined by
\[
(G(U),V)_{\mathbb{R}^M}  := A_h[(u_h,\lambda_h),(v_h,\mu_h)] -
(f,v_h)_\Omega.
\]
Existence and uniqueness of a solution to \eqref{FEM} is equivalent to
showing that there exists a unique $U \in \mathbb{R}^M$ such that $G(U)=0$.

We start by showing
some positivity results and a priori bounds
\begin{lemma}\label{lem:apriori}
There exists $\alpha>0$ and an associated constant $c_\alpha>0$ so
that with the form $b$
defined by \eqref{stab_form1}, $\delta>0$ and $\gamma= \gamma_0 h^{2 s}$ with
$\gamma_0 > 0$ there holds, for all $(u_h,\lambda_h) \in V_h \times \Lambda_h$
\begin{multline}\label{form2_stab}
\|\nabla u_h\|_\Omega^2 + 
\gamma \|\lambda_h +  \gamma^{-1} [P_{\gamma+}(u_h,\lambda_h)]_+\|_C^2 +
c_\alpha \|\gamma^{\frac12} \lambda_h\|_C^2 \\
\lesssim A_h[(u_h,\lambda_h),(u_h-\alpha R_h, \lambda_h)],
\end{multline}
where $R_h \in V_h$ is defined in Lemma \ref{lem:lift}, such that
$R_h\vert_C := \gamma \xi_h I_{cf} \lambda_h$.

There exists $\alpha>0$ and an associated constant $c_\alpha>0$ so
that with the form $b$
defined by \eqref{infsup_stabform1}, $k \ge 2$ and $\gamma=
\gamma_0 h^{2 s}$ with $\gamma_0 > 0$, $\gamma_0$ sufficiently large,
and $\delta>0$  there holds, for all $(u_h,\lambda_h) \in V_h \times \Lambda_h$
\begin{multline}\label{form1_stab}
\|\nabla u_h\|_\Omega^2 + 
\gamma^{-1} \|u_h +  [P_{\gamma-}(u_h,\lambda_h)]_+\|_C^2 +
c_\alpha \|\gamma^{\frac12} \lambda_h\|_C^2  \\
\lesssim A_h[(u_h,\lambda_h),(u_h+\alpha R_h,\lambda_h +
\gamma^{-1}\pi_0 u_h)],
\end{multline}
with $R_h$ as before. In
case $k=1$ \eqref{form1_stab} holds under the additional that $0<\delta
\leq (c_0 C_T)^2 \gamma_0^{-1}$.

Under the same conditions on the
parameters as above, for both formulations there also holds, for
$(u_h,\lambda_h)$ solution of \eqref{CompFEM}, 
\begin{equation}\label{form_apriori}
\|\nabla u_h\|_\Omega + \|\gamma^{\frac12} \lambda_h\|_C \lesssim \|f\|_\Omega.
\end{equation}
The hidden constants are independent of $h$.
\end{lemma}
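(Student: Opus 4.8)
The plan is to prove the two coercivity estimates \eqref{form2_stab} and \eqref{form1_stab} by a \emph{baseline plus inf--sup perturbation} argument, and then to deduce the a priori bound \eqref{form_apriori} by inserting the discrete solution into whichever of the two applies and estimating the resulting linear functional. In both cases the baseline comes from testing $A_h$ essentially with $(u_h,\lambda_h)$, and control of the multiplier norm is recovered by testing in addition with a perturbation built from the lift $R_h$ of Lemma~\ref{lem:lift} and, for the second form, from the projection $\pi_0 u_h$.

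Take first the form \eqref{stab_form1} (equivalently \eqref{stab_form0}). Testing $A_h$ with $(u_h,\lambda_h)$ and using the pointwise identity $[x]_+x=[x]_+^2$ together with completion of the square gives the baseline identity $A_h[(u_h,\lambda_h),(u_h,\lambda_h)]=\|\nabla u_h\|_\Omega^2+\gamma\|\lambda_h+\gamma^{-1}[P_{\gamma+}(u_h,\lambda_h)]_+\|_C^2+|\lambda_h|_s^2$, so the first two left-hand terms of \eqref{form2_stab} are already controlled (with ample room in the coefficient of the third). To recover $\|\gamma^{1/2}\lambda_h\|_C$ one tests in addition with $(-\alpha R_h,0)$, $R_h\in V_h$ the lift of $\gamma\xi_h I_{cf}\lambda_h$ from Lemma~\ref{lem:lift}; this produces $-\alpha(\nabla u_h,\nabla R_h)_\Omega-\alpha\langle[P_{\gamma+}(u_h,\lambda_h)]_+,\xi_h I_{cf}\lambda_h\rangle_C$, and writing $\gamma^{-1}[P_{\gamma+}(u_h,\lambda_h)]_+=(\lambda_h+\gamma^{-1}[P_{\gamma+}(u_h,\lambda_h)]_+)-\lambda_h$ splits the second term into the \emph{good} contribution $\alpha\gamma\langle\lambda_h,\xi_h I_{cf}\lambda_h\rangle_C$ and a remainder. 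For the good term, $\langle\lambda_h,\xi_h I_{cf}\lambda_h\rangle_C=\|\xi_h^{1/2}\lambda_h\|_C^2+\langle\xi_h\lambda_h,I_{cf}\lambda_h-\lambda_h\rangle_C\ge c_\xi^2\|\lambda_h\|_C^2-c_s\delta^{-1/2}\gamma^{-1/2}\|\lambda_h\|_C|\lambda_h|_s$ by \eqref{eq:xicont} (which follows from Assumption~\ref{ass:infsup}), Proposition~\ref{lem:jump_cont} and the definition \eqref{def:s}; the remainder and the Laplacian cross term are bounded by Cauchy--Schwarz, $\|I_{cf}\lambda_h\|_C\le c_{cf}\|\lambda_h\|_C$, Lemma~\ref{lem:lift} and the scaling $\gamma=\gamma_0 h^{2s}$, which yields $\|R_h\|_{H^1(\Omega)}\lesssim\gamma_0^{1/2}\|\gamma^{1/2}\lambda_h\|_C$. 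Adding the baseline and $\alpha$ times the perturbation and applying Young's inequality with weights chosen so that the coefficient-one baseline quantities absorb all remainders at the cost of only $O(\alpha^2)$ and $O(\alpha^2\gamma_0)$ multiples of $\|\gamma^{1/2}\lambda_h\|_C^2$, the good term leaves a net contribution $\alpha c_\xi^2\|\gamma^{1/2}\lambda_h\|_C^2$, so choosing $\alpha$ small (depending on $\gamma_0,\delta,c_\xi,c_{cf},C_R$) yields \eqref{form2_stab}.

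For the form \eqref{infsup_stabform1} the argument is analogous but the baseline comes from testing with $(u_h,\lambda_h+\gamma^{-1}\pi_0 u_h)$: the linear coupling $-\langle\lambda_h,u_h\rangle_C+\langle\lambda_h,u_h\rangle_C$ cancels (as in the passage to \eqref{eq:stat_point3}), $\langle\gamma^{-1}\pi_0 u_h,u_h\rangle_C=\gamma^{-1}\|\pi_0 u_h\|_C^2$, and, using $[P_{\gamma-}(u_h,\lambda_h)]_+=[P_{\gamma+}(u_h,\lambda_h)]_+-P_{\gamma+}(u_h,\lambda_h)$ and completion of the square, one arrives at a lower bound $\|\nabla u_h\|_\Omega^2+\gamma^{-1}\|u_h+[P_{\gamma-}(u_h,\lambda_h)]_+\|_C^2+|\lambda_h|_s^2-\gamma^{-1}\|(1-\pi_0)u_h\|_C^2$ plus cross terms. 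Three points close this: $\gamma^{-1}\|(1-\pi_0)u_h\|_C^2\le c_0^2\gamma_0^{-1}\|\nabla u_h\|_\Omega^2$ by the approximation property of $\pi_0$ and $\gamma=\gamma_0 h^{2s}$, absorbed once $\gamma_0$ is large (for $k=1$ this is precisely where $0<\delta\le(c_0C_T)^2\gamma_0^{-1}$ enters, $|\lambda_h|_s^2$ having to absorb part of an $s$-contribution simultaneously); the cross term $\gamma^{-1}\langle(1-\pi_0)u_h,[P_{\gamma-}(u_h,\lambda_h)]_+\rangle_C$ is handled via the orthogonality $\gamma^{-1}\langle(1-\pi_0)u_h,\,\cdot\,\rangle_C=\gamma^{-1}\langle(1-\pi_0)u_h,(1-\pi_0)\,\cdot\,\rangle_C$ and the $1$-Lipschitz property of $[\,\cdot\,]_+$ (Lemma~\ref{lem:monotone}), which again produces only $O(\gamma_0^{-1})$ multiples of the baseline quantities; and $s(\lambda_h,\gamma^{-1}\pi_0 u_h)$ is bounded using Proposition~\ref{lem:jump_cont} and $\|h^{1/2}\jump{\pi_0 u_h}\|_{\mathcal{F}_C}\lesssim h^s\|\nabla u_h\|_\Omega$, giving $\lesssim|\lambda_h|_s^2+\gamma_0^{-1}\delta\|\nabla u_h\|_\Omega^2$. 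After fixing $\gamma_0$ large one adds the displacement perturbation $-\alpha R_h$ exactly as above (here $\lambda_h$ enters linearly, so the coupling directly yields $+\alpha\gamma\langle\lambda_h,\xi_h I_{cf}\lambda_h\rangle_C$) and chooses $\alpha$ small to obtain \eqref{form1_stab}.

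Finally, for \eqref{form_apriori}: given a solution $(u_h,\lambda_h)$ of \eqref{CompFEM}, apply the relevant coercivity estimate and discard the nonnegative nonlinear term on its left-hand side; since the displacement component of the test pair is $u_h-\alpha R_h$, the right-hand side equals $(f,u_h)_\Omega-\alpha(f,R_h)_\Omega\lesssim\|f\|_\Omega(\|\nabla u_h\|_\Omega+\alpha\gamma_0^{1/2}\|\gamma^{1/2}\lambda_h\|_C)$ by Poincar\'e's inequality, Lemma~\ref{lem:lift} and $\gamma=\gamma_0 h^{2s}$, and a final Young's inequality with a small weight on $\|\nabla u_h\|_\Omega+\|\gamma^{1/2}\lambda_h\|_C$ absorbs these into the left-hand side. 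The main obstacle throughout is the inf--sup step: choosing the lift $R_h$ (and, for \eqref{infsup_stabform1}, the correction $\gamma^{-1}\pi_0 u_h$) and then bookkeeping all the cross terms so that they are absorbable --- this closes only because $\gamma=\gamma_0 h^{2s}$ renders $\gamma^{1/2}h^{-s}$, $\gamma^{-1}h^{2s}$ and $\gamma^{-1}\|(1-\pi_0)u_h\|_C^2$ all of order one, and because the free parameters are fixed in the order $\gamma_0$ large, then $\delta$, then $\alpha$ small; the interplay of the two completed squares through $[P_{\gamma-}]_+=[P_{\gamma+}]_+-P_{\gamma+}$ and Lemma~\ref{lem:monotone} is the delicate algebraic part.
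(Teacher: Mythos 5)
Your overall strategy is the paper's: a baseline obtained by testing with $(u_h,\lambda_h)$ (for \eqref{stab_form0}/\eqref{stab_form1}) resp.\ $(u_h,\lambda_h+\gamma^{-1}\pi_0 u_h)$ (for \eqref{infsup_stabform1}), completion of the square using $[x]_+x=[x]_+^2$, and recovery of $\|\gamma^{1/2}\lambda_h\|_C$ by an inf--sup perturbation with the lifted Oswald average $R_h$ of Lemma~\ref{lem:lift}, with $\gamma=\gamma_0h^{2s}$ making all scalings order one and $\alpha$ chosen last; the treatment of the jump correction via \eqref{eq:xicont} and Proposition~\ref{lem:jump_cont}, the stabilization term for $k=1$, and the derivation of \eqref{form_apriori} all coincide with the paper's proof (your sign $-\alpha R_h$ in the second formulation is the one the paper's own proof uses, via $r_h=-\gamma I_{cf}\xi_h\lambda_h$, so that discrepancy with the displayed statement is the paper's, not yours).

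There is, however, one genuine gap, in the cross term of the second formulation for $k\ge 2$, which is exactly the case \eqref{form1_stab} is stated for. You handle $\gamma^{-1}\left<(1-\pi_0)u_h,[P_{\gamma-}(u_h,\lambda_h)]_+\right>_C$ by moving $(1-\pi_0)$ onto the positive part and invoking the $1$-Lipschitz bound of Lemma~\ref{lem:monotone}, claiming this yields only $O(\gamma_0^{-1})$ multiples of baseline quantities. That works only when $\lambda_h$ is piecewise constant: the Lipschitz comparison gives $\|(1-\pi_0)[P_{\gamma-}(u_h,\lambda_h)]_+\|_C\le\|(1-\pi_0)(\gamma\lambda_h-u_h)\|_C\le\|(1-\pi_0)u_h\|_C+\gamma\|(1-\pi_0)\lambda_h\|_C$, and for $k\ge2$ the term $\gamma\|(1-\pi_0)\lambda_h\|_C$ does not vanish and is not a baseline quantity; it produces a contribution of size $\gamma_0^{-1/2}\|\nabla u_h\|_\Omega\|\gamma^{1/2}\lambda_h\|_C$, whose absorption would require the coefficient $\alpha c_\xi^2$ of the recovered multiplier term to dominate an $O(\gamma_0^{-1})$ constant --- impossible here since the $a(u_h,R_h)$ remainder already forces $\alpha\lesssim\gamma_0^{-1}$, so the comparison reduces to an unverifiable relation between fixed constants. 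The repair is immediate and is what the paper does: bound the cross term by Cauchy--Schwarz against the full square, $\gamma^{-1}\left<\pi u_h-u_h,u_h+[P_{\gamma-}(u_h,\lambda_h)]_+\right>_C\le \tfrac12 c^2 \gamma_0^{-1}\|\nabla u_h\|_\Omega^2+\tfrac12\gamma^{-1}\|u_h+[P_{\gamma-}(u_h,\lambda_h)]_+\|_C^2$, sacrificing half of the baseline square (harmless), and for $k\ge2$ use the continuous projection $\pi_1$ in place of $\pi_0$, which in addition makes $s(\lambda_h,\gamma^{-1}(\pi_1u_h-u_h))=0$ so no condition on $\delta$ is needed; your Lipschitz/orthogonality shortcut is fine, and indeed slightly cleaner, for $k=1$.
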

\begin{remark}
For $k\ge 2$ and continuous multiplier space the parameter $\delta$
and the term
$|\lambda_h|_s^2$ can be dropped above.
\end{remark}
\begin{proof}
First consider the claims for formulation 1. By testing in
\eqref{CompFEM} with $v_h = u_h$ and $\mu_h = \lambda_h$ and observing
that
\begin{multline*}
\left<
 \gamma^{-1}[P_{\gamma+}(u_h,\lambda_h)]_+, \gamma  \lambda_h\right>_{C} 
+\|\gamma^{\frac12}
  \lambda_h\|^2_{C} \\=\|\gamma^{\frac12}
  \lambda_h\|^2_{C} + \left<
 \gamma^{-1}[P_{\gamma+}(u_h,\lambda_h)]_+, - \gamma  \lambda_h\right>_{C} 
+ 2 \left<
 \gamma^{-\frac12}[P_{\gamma+}(u_h,\lambda_h)]_+,  \gamma^{\frac12}  \lambda_h\right>_{C} 
\end{multline*}
we obtain
the relation
\begin{equation*}
\|\nabla u_h\|_\Omega^2 + \gamma \|\gamma^{-\frac12}[P_{\gamma+}(u_h,\lambda_h)]_+ +
\lambda_h\|_C^2 + |\lambda_h|_s^2\\
= A_h[(u_h,\lambda_h),(u_h,\lambda_h)] 
\end{equation*}
and hence by using the Cauchy-Schwarz inequality and a Poincar\'e
inequality in the right hand side
\begin{equation}\label{eq:form1_bound1}
\frac12 \|\nabla u_h\|_\Omega^2 + \gamma \|\gamma^{-\frac12}[P_{\gamma+}(u_h,\lambda_h)]_+ +
\lambda_h\|_C^2 + |\lambda_h|_s^2 \lesssim \|f\|^2_\Omega
\end{equation}
Using now the first equation we have testing with $v_h = -\alpha R_h$,
with $r_h = \gamma  I_{cf}(\xi_h \lambda_h)$ and
$\mu_h = 0$,
\begin{align}\nonumber
&A_h[(u_h,\lambda_h),(- R_h,0)] 
\\ \nonumber
&\qquad = a(u_h,- R_h) +
\left<\gamma^{-1}[P_{\gamma+}(u_h,\lambda_h)]_+,-\gamma I_{cf} (\xi_h \lambda_h)\right>_C 
\\ \nonumber
&\qquad = a(u_h,- R_h) +
\left<\gamma^{-1}[P_{\gamma+}(u_h,\lambda_h)]_++ \lambda_h,-\gamma
  I_{cf} (\xi_h \lambda_h)\right>_C 
  \\ \label{eqrepeat}
&\qquad \qquad -\left<\lambda_h,\gamma (\xi_h \lambda_h-
  I_{cf} (\xi_h \lambda_h))\right>_C + \left<\lambda_h,\gamma \xi_h \lambda_h \right>_C.
\end{align}
For the last term in the right hand side we have by the inequality \eqref{eq:xicont},
$
c^2_\xi \|\gamma^{\frac12} \lambda_h\|_C^2 \leq (\gamma \lambda_h, \xi_h \lambda_h)_C$.
The second to last term of the right hand side, which is zero for continuous multiplier spaces, can be bounded
using Proposition \ref{lem:jump_cont}
\begin{equation}\label{eq:bound1}
 (\gamma
\lambda_h, \xi_h \lambda_h - I_{cf}(\xi_h \lambda_h))_C \leq
c^2_\xi \frac14\|\gamma^{\frac12} \lambda_h\|_C^2 + c_s^2 c^{-2}_\xi 
\delta^{-1} |\lambda_h|_s^2.
\end{equation}
The second term is bounded using a Cauchy-Schwarz inequality and the
stability of $I_{cf}$,
\begin{multline}\label{eq:bound2}
\left<\gamma^{-1}[P_{\gamma+}(u_\lambda,\lambda_h)]_++\lambda_h, \gamma I_{cf}(\xi_h \lambda_h)
\right>_C \\
\leq \frac12 (c_{cf} c_\xi )^{-2} \gamma
\|\gamma^{-1}[P_{\gamma+}(u_\lambda,\lambda_h)]_++\lambda_h\|^2_C +
\frac14 c^2_\xi \|\gamma^{\frac12} \lambda_h\|_C^2 
\end{multline}
for the first term we use the Caucy-Schwarz inequality
followed by the stability of $R_h$ and of $I_{cf}$ to obtain
\begin{equation}\label{eq:bound3}
 a(u_h,R_h) \leq  C_R^2 h^{-2s} \gamma c_{cf}^2 c^{-2}_\xi \|\nabla u_h\|_\Omega^2 + c^2_\xi \frac14 \|\gamma^{\frac12} \lambda_h\|_C^2.
\end{equation}
Applying the inequalities \eqref{eq:bound1}-\eqref{eq:bound3} to
\eqref{eqrepeat} we have
\begin{align}\nonumber
& c^2_\xi \alpha \|\gamma^{\frac12} \lambda_h\|_C^2 -  C_R^2 h^{-2s}
\gamma c_{cf}^2 c^{-2}_\xi \alpha \|\nabla u_h\|_\Omega 
\\ \nonumber
&\qquad\qquad -\frac12
(c_{cf} c_\xi )^{-2} \gamma \alpha 
\|\gamma^{-1}[P_{\gamma+}(u_\lambda,\lambda_h)]_+ +\lambda_h\|^2_C
\\   \nonumber
&\qquad\qquad -  c_s^2 c^{-2}_\xi 
\delta^{-1} \alpha |\lambda_h|_s^2 
\\ \label{eq:multfinal}
&\qquad 
\leq A_h[(u_h,\lambda_h),(- \alpha R_h,0)]
\end{align}
We conclude by observing that $ h^{-2s}
\gamma = O(1)$ and by combining the bounds \eqref{eq:xicont},
\eqref{eq:form1_bound1} and \eqref{eq:multfinal} with $\alpha$ small
enough. The a priori estimate follows noting that for
$(u_h,\lambda_h)$ solution of \eqref{FEM} there holds using the
Poincar\'e inequality and the properties of $R_h$,
\[
A_h[(u_h,\lambda_h),(u_h - \alpha R_h,0)] = (f,u_h - \alpha R_h)
\leq C \|f\|_\Omega(\|\nabla u_h\|_{\Omega} + \|\gamma^{\frac12} \lambda_h\|_C).
\]

To prove \eqref{form1_stab} we start by testing in the left hand side of \eqref{CompFEM} with $v_h=u_h$ and $\mu_h =
\lambda_h + \gamma^{-1}\pi_i u_h = \gamma P_{\gamma-}(u_h,\lambda_h) +
\gamma^{-1} (u_h + \pi_i u_h)$, where $i=0$ if $k=1$ and $i=1$ for
$k\ge 2$. 
Observing this time that, using the definition
\eqref{infsup_stabform1} and adding and subtracting $u_h$ at suitable
places the following equality holds
\begin{align*}
b[(u_h,\lambda_h),(u_h,\lambda_h + \gamma^{-1}\pi_i u_h)] 
&= \gamma^{-1}
\left< u_h,u_h\right>_C-\gamma^{-1}\|\pi_i u_h - u_h\|^2_C 
\\
&\qquad + \gamma^{-1}
\|[P_{\gamma-}(u_h,\lambda_h)]_+\|_C^2 
\\
&\qquad 
+ 2 \left<\gamma^{-1} 
  u_h,[P_{\gamma-}(u_h,\lambda_h)]_+\right>_C
\\
&\qquad 
  +\gamma^{-1} \left<\pi_i
  u_h - u_h,[P_{\gamma-}(u_h,\lambda_h)]_+\right>_C 
\\
&\qquad
+  |\lambda_h|_s^2 +s(\lambda_h,  \gamma^{-1}(\pi_i u_h - u_h)).
\end{align*}
This results in 
\begin{multline*}
\|\nabla u_h\|_\Omega^2 + 
\gamma^{-1} \|u_h +  [P_{\gamma-}(u_h,\lambda_h)]_+\|_C^2 + |\lambda_h|_s^2 -\gamma^{-1} \|\pi_i u_h
  - u_h\|^2_C \\
+ \gamma^{-1} \left<\pi_i u_h - u_h, u_h+
  [P_{\gamma-}(u_h,\lambda_h)]_+\right>_C +s(\lambda_h,  \gamma^{-1}(\pi_i u_h - u_h))\\
= A_h[(u_h,\lambda_h),(v_h,\mu_h)].
\end{multline*}
We now bound the three last terms on the left hand side. First by the
properties of $\pi_i$ we have
\begin{equation}\label{eq:bound12}
\gamma^{-1}\|\pi_i u_h - u_h\|^2_C \leq c^2_i h^{2s} \gamma^{-1}
  \|\nabla u_h\|_\Omega^2.
\end{equation}
Using a Cauchy-Schwarz inequality, the previous result and an
arithmetic-geometric inequality we have
\begin{multline}\label{eq:bound22}
\gamma^{-1} \left<\pi_i u_h - u_h, u_h+
  [P_{\gamma-}(u_h,\lambda_h)]_+\right>_C \leq \frac12 c^2_i h^{2s} \gamma^{-1}
  \|\nabla u_h\|_\Omega^2 \\+ \frac12 \gamma^{-1} \|u_h +  [P_{\gamma-}(u_h,\lambda_h)]_+\|_C^2 .
\end{multline}
Finally for $k=1$ we have for the last term
\begin{align*}
s(\lambda_h,  \gamma^{-1}(\pi_0 u_h - u_h)) 
&\leq \frac12
|\lambda_h|_s^2+  \gamma^{-1} \delta C_T^2\|\pi_0 u_h
  - u_h\|_C^2 
\\
&\leq \frac12
|\lambda_h|_s^2+\frac12  \frac{\delta c_0^2 C_T^2 }{\gamma_0} \|\nabla  u_h\|_\Omega^2 
\end{align*}
and for $k\ge 2$, $s(\lambda_h,  \gamma^{-1}(\pi_1 u_h - u_h)) = 0$.
Collecting the results above we obtain for $k=1$
\begin{align} \nonumber
&(1 - 3/2 c_0^2 \gamma_0^{-1}- 1/2 \delta c_0^2 C_T^2\gamma_0^{-1})
\|\nabla u_h\|_\Omega^2 
\\ \nonumber
&\qquad \qquad + 
\frac12 \gamma^{-1} \|u_h +  [P_{\gamma-}(u_h,\lambda_h)]_+\|_C^2 + \frac12
|\lambda_h|_s^2  
\\ \label{eq:1s_apriori}
&\qquad \lesssim  A_h[(u_h,\lambda_h),(v_h,\mu_h)].
\end{align}
We see that the factor $(1 - 3/2 c_0^2 \gamma_0^{-1}- 1/2 \delta c_0^2
C_T^2\gamma_0^{-1})$ is positive under the assumptions on $\gamma_0$
and $\delta$.
The corresponding inequality for $k\ge 2$ is obtained by omitting the
term with $\delta$ and replacing $c_0$ with $c_1$.
Observe that by using $v_h = R_h$ with $r_h = - \gamma I_{cf} \xi_h \lambda_h$ and
$\mu_h = 0$ we have using similar arguments as above
\begin{equation}\label{eq:1st_lam}
\gamma\|\xi_h^{\frac12} \lambda_h\|_C^2 + \left<\lambda_h, \gamma (\xi_h
  \lambda_h -  I_{cf} \xi_h \lambda_h) \right>_C  + a(u_h,R_h) = A_h[(u_h,\lambda_h),(R_h,0)].
\end{equation}
Using once again \eqref{eq:bound1} and \eqref{eq:bound3} 
\begin{align}\nonumber
&\frac12 c_\xi \| \gamma^{\frac12}\lambda_h\|_C - C_R^2 h^{-2s} \gamma
c_{cf}^2 c_\xi^{-2} \|\nabla u_h\|^2_\Omega - c_s^2 c_\xi^{-2} \delta^{-1}
|\lambda_h|^2_s 
\\ \label{lambda_apriori}
&\qquad 
\leq
 A_h[(u_h,\lambda_h),(R_h,0)]
\end{align}
where the stabilization contribution can be dropped whenever 
continuous approximation is used for the multiplier space. We conclude
as in the previous case by combining the bounds \eqref{lambda_apriori}
and \eqref{eq:1s_apriori}. The a priori estimate \eqref{form_apriori} also follows as before.
\end{proof}
\begin{proposition}
The formulation \eqref{CompFEM} using the contact operators \eqref{infsup_stabform1} or
\eqref{stab_form1}, and the same assumptions on the parameters
$\delta$, $\gamma$ as in Lemma \ref{lem:apriori}, admits a unique solution.
\end{proposition}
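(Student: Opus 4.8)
The plan is: obtain existence from Brouwer's fixed point theorem applied to the residual map $G$, fed with the a~priori coercivity of Lemma~\ref{lem:apriori}; and obtain uniqueness from the monotonicity of the nonlinearity (Lemma~\ref{lem:monotone}) plugged into the same coercivity argument.

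\emph{Existence.} First, $G:\mathbb{R}^M\to\mathbb{R}^M$ is continuous, since $x\mapsto[x]_+$ is Lipschitz and every remaining term of $A_h[\cdot,\cdot]$ is polynomial in the nodal coefficients. Identify $\mathbb{R}^M$ with $V_h\times\Lambda_h$ and let $T:\mathbb{R}^M\to\mathbb{R}^M$ be the linear map realising the test-function shift of Lemma~\ref{lem:apriori}: $T(u_h,\lambda_h)=(u_h-\alpha R_h,\lambda_h)$ for Formulation~1 and $T(u_h,\lambda_h)=(u_h+\alpha R_h,\lambda_h+\gamma^{-1}\pi_i u_h)$ for Formulation~2, where $R_h$ depends linearly on $\lambda_h$ and $\|R_h\|_{H^1(\Omega)}\lesssim\gamma_0^{1/2}\|\gamma^{1/2}\lambda_h\|_C$ as in the proof of Lemma~\ref{lem:apriori}. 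For Formulation~1, $T$ is block lower triangular with identity diagonal blocks, hence bijective; for Formulation~2, $T=I$ plus an off-diagonal coupling of norm $\lesssim\alpha c_{cf}$, hence bijective once $\alpha$ is taken small enough (as it already is in Lemma~\ref{lem:apriori}, shrinking it further if needed). Set $\Phi:=T^{\mathsf T}\circ G$. Then, writing $(v_h,\mu_h)=T(u_h,\lambda_h)$,
\[
(\Phi(U),U)_{\mathbb{R}^M}=(G(U),TU)_{\mathbb{R}^M}=A_h[(u_h,\lambda_h),(v_h,\mu_h)]-(f,v_h)_\Omega .
\]
By Lemma~\ref{lem:apriori} the first term is $\gtrsim\|\nabla u_h\|_\Omega^2+c_\alpha\|\gamma^{1/2}\lambda_h\|_C^2=:\tnorm{(u_h,\lambda_h)}^2$, a norm on $V_h\times\Lambda_h$, while $(f,v_h)_\Omega\le C\|f\|_\Omega\tnorm{(u_h,\lambda_h)}$ by Poincar\'e's inequality together with the bound on $\|R_h\|_{H^1(\Omega)}$. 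Hence $(\Phi(U),U)_{\mathbb{R}^M}\ge\tnorm{(u_h,\lambda_h)}\,(c\,\tnorm{(u_h,\lambda_h)}-C\|f\|_\Omega)$, which is $>0$ once $\tnorm{(u_h,\lambda_h)}$ exceeds a fixed threshold; since $\tnorm{\cdot}$ and $\|\cdot\|_{\mathbb{R}^M}$ are equivalent on this finite-dimensional space, $(\Phi(U),U)_{\mathbb{R}^M}>0$ on the sphere $\{\,\|U\|_{\mathbb{R}^M}=R\,\}$ for $R$ large. The standard consequence of Brouwer's theorem then gives $U^\star$ with $\|U^\star\|_{\mathbb{R}^M}\le R$ and $\Phi(U^\star)=0$, i.e. $T^{\mathsf T}G(U^\star)=0$; as $T^{\mathsf T}$ is invertible, $G(U^\star)=0$, which is a discrete solution.

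\emph{Uniqueness.} Let $(u_1,\lambda_1)$ and $(u_2,\lambda_2)$ both solve \eqref{CompFEM}, put $e_u=u_1-u_2$, $e_\lambda=\lambda_1-\lambda_2$, and subtract the two equations; the loads $(f,\cdot)_\Omega$ cancel and $A_h[(u_1,\lambda_1),(v_h,\mu_h)]-A_h[(u_2,\lambda_2),(v_h,\mu_h)]=0$ for all $(v_h,\mu_h)$. Rerun the proof of Lemma~\ref{lem:apriori} with $(u_h,\lambda_h)$ replaced by $(e_u,e_\lambda)$ and $R_h$ built from $e_\lambda$: every bilinear term transforms exactly as before, and the lone nonlinear contribution is $\langle\gamma^{-1}([P_{\gamma+}(u_1,\lambda_1)]_+-[P_{\gamma+}(u_2,\lambda_2)]_+),P_{\gamma+}(v_h,\mu_h)\rangle_C$ for Formulation~1 (the analogue with $P_{\gamma-}$ and $\langle\mu_h,\cdot\rangle_C$ for Formulation~2). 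Since $P_{\gamma\pm}(u_1,\lambda_1)-P_{\gamma\pm}(u_2,\lambda_2)=P_{\gamma\pm}(e_u,e_\lambda)$, Lemma~\ref{lem:monotone} furnishes precisely the two properties of this difference that were used for the original nonlinearity: its pairing with the argument difference is $\ge\gamma^{-1}\|[P_{\gamma+}(u_1,\lambda_1)]_+-[P_{\gamma+}(u_2,\lambda_2)]_+\|_C^2\ge0$ (replacing $[x]_+x=[x]_+^2$), and $\|[P_{\gamma+}(u_1,\lambda_1)]_+-[P_{\gamma+}(u_2,\lambda_2)]_+\|_C\le\|P_{\gamma+}(e_u,e_\lambda)\|_C$ (replacing $|[x]_+|\le|x|$). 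The same chain of inequalities therefore yields $\|\nabla e_u\|_\Omega^2+c_\alpha\|\gamma^{1/2}e_\lambda\|_C^2\lesssim0$, whence $\nabla e_u=0$ and $e_\lambda=0$; with the homogeneous condition on $\Gamma_D$ and Poincar\'e's inequality, $e_u=0$ as well, so the discrete solution is unique.

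The step that needs care is structural rather than analytic: one must ensure $T$ is a genuine linear \emph{bijection} so that a zero of $T^{\mathsf T}G$ really is a zero of $G$ (hence the smallness of $\alpha$ in Formulation~2), and one must check that the coercivity computation of Lemma~\ref{lem:apriori}, stated there for a single argument, transfers verbatim to the difference of two solutions — which is exactly where the two estimates of Lemma~\ref{lem:monotone} take over the role played by the elementary identities for $[\cdot]_+$. Everything else is bookkeeping already carried out in Lemmas~\ref{lem:monotone}–\ref{lem:apriori}.
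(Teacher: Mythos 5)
Your proposal follows essentially the same route as the paper: existence via Brouwer's fixed point theorem applied to the residual map $G$ tested against a linear shift matrix (your $T$, the paper's $B$) whose positivity on large spheres is supplied by Lemma \ref{lem:apriori}, and uniqueness by rerunning that coercivity chain on the difference of two solutions with the two estimates of Lemma \ref{lem:monotone} standing in for the elementary identities $[x]_+x=[x]_+^2$ and $|[x]_+|\leq |x|$, exactly as the paper does (it merely organizes the multiplier step through \eqref{part_unique}, \eqref{eq:multfinal} and \eqref{eq:1st_lam}). The one imprecision is your justification of the bijectivity of $T$ for Formulation 2: the coupling $u_h\mapsto\gamma^{-1}\pi_i u_h$ is $O(1)$ in the relevant scaled norms, not $O(\alpha c_{cf})$; invertibility for small $\alpha$ nevertheless holds because the composition of the two off-diagonal couplings does carry the factor $\alpha$ (this is what the paper's norm-equivalence computation $b_1|U|\leq |BU|\leq b_2|U|$ establishes), so the overall argument is sound.
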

\begin{proof}
 By the
positivity results \eqref{form2_stab} and \eqref{form1_stab} of Lemma \ref{lem:apriori} we have for each method that there exists a
linear mapping $B:\mathbb{R}^M \mapsto \mathbb{R}^M$ such that $b_1
|U| <|B U|
\leq b_2 |U|$ for some $0<b_1\leq b_2$ and that for $U$ sufficiently
big
\begin{equation}\label{positive_G}
0 < (G(U),B U).
\end{equation}

We give details regarding the construction of $B$ only in the case of
formulation 2 with $k=1$.
The argument for $k \ge 2$, and that for formulation 1, are similar.
Let the positive constants $c_h$ and $C_h$ denote the smallest and the
largest eigenvalues respectively of the block diagonal matrix in
$\mathbb{R}^{M\times M}$ with
diagonal blocks given by given by $ (\nabla \varphi_i,\nabla
\varphi_j)_{\Omega}+ \left<\gamma^{-1}\varphi_i,\varphi_j  \right>_C$, $1\leq i,j\leq
N_V$ where $\varphi_i$, denotes the basis functions for
the space $V_h$ and $\tfrac12 \gamma  (\psi_i,\psi_j)_C$ where  $\psi_i$, denotes the basis functions for
the space $\Lambda_h$, $1\leq i,j\leq
N_\Lambda$  such that, with $ \| u_h\|^2_{1,h}:= \|\nabla u_h\|^2_{\Omega}+ \|\gamma^{-\frac12} u_h\|^2_{C}$
\[
c_h |U|^2_{\mathbb{R}^{M}} \leq  \| u_h\|^2_{1,h}+
\frac14 \gamma  \|\lambda_h\|^2_C \leq C_h |U|^2_{\mathbb{R}^{M}}.
\]
Recalling the a priori bound \eqref{form1_stab}, let $B$ denote the transformation matrix such that the finite element function
corresponding to the vector $B U$ is the function $(u_h+ \alpha R_h,
\lambda_h +  \gamma^{-1}\pi_0 u_h)$, with $R_h$ defined in Lemma
\ref{lem:apriori}. First we show that for $\alpha$ sufficiently small, there are constants $b_1$ and $b_2$ such that $b_1 |U|_{\mathbb{R}^{M}}
\leq |BU|_{\mathbb{R}^{N_V}} \leq b_2 |U|_{\mathbb{R}^{M}}$. This can be seen by observing that
\begin{align*}
\| u_h\|^2_{1,h}+ \frac14 \gamma \|\lambda_h\|^2_C 
&\leq 
2 \| u_h+\alpha R_h\|^2_{1,h}+  \frac12 \gamma
\|\lambda_h + \gamma^{-1} \pi_0 u_h\|^2_C
\\ 
&\qquad + \frac12\|\gamma^{-1/2} \pi_0 u_h\|^2_C +
2 \| \alpha R_h\|^2_{1,h}
\\
&\leq 
2 C_h |B U|^2_{\mathbb{R}^{M}} + \frac12 \|u_h\|^2_{1,h} + C \alpha \gamma \|\lambda_h\|^2_C
\end{align*}
where we have used the properties of $R_h$ from Lemma \ref{lem:lift}.
It follows, for $\alpha$ small enough, that
$$
\frac12 c_h  |U|^2_{\mathbb{R}^{M}}\leq
(1 -\frac12)\|u_h\|^2_{1,h}+ (\frac14 - C \alpha)
\gamma\|\lambda_h\|^2_C\leq 2 C_h |BU|^2_{\mathbb{R}^{M}}.
$$

Similarly we may prove the upper bound using that by the properties of
$R_h$ and $\pi_0 u_h$ we have
\begin{align*}
c_h |B U|^2_{\mathbb{R}^{M}} 
&\leq 
\|u_h+\alpha R_h\|^2_{1,h}+\frac14 \gamma \|\lambda_h + \gamma^{-1} \pi_0 u_h\|^2_{C}\\
&\leq 
2 \|u_h\|^2_{1,h}+2 \|\alpha R_h\|^2_{1,h}+\frac14 \gamma \|\lambda_h\|^2_{C} + \frac12  \|\gamma^{-1/2} \pi_0 u_h\|^2_{C}
\\
&\leq C (\|u_h\|^2_{1,h}+\frac14 \gamma \|\lambda_h\|^2_{C}).
\end{align*}

Assume that the positivity \eqref{positive_G} holds whenever $|U|_{\mathbb{R}^{M}}\ge q$.
Assume now that there is no $U$ such that $G(U)=0$ and define the
function
\[
\phi(U) = - q/b_1 B^T G(U)/|G(U)|_{\mathbb{R}^{M}}.
\]
Then $\phi:B_{R} \mapsto B_{R}$, where $R= q b_2/b_1$ and, since $G(U)
\ne 0$, $\phi$ is
continuous by Lemma \ref{bcont} and equivalence of norms on finite
dimensional spaces. Hence, since $B^T$ satisfies the same
bounds as $B$, there
exists a fixed point $X \in B_R$, with $$|X|_{\mathbb{R}^M} = q/b_1|
B^T G(X)|_{\mathbb{R}^{M}}/|G(X)|_{\mathbb{R}^{M}}\ge q $$ such that 
\[
X = \phi(X).
\]
It follows that 
\[
0<|X|^2_{\mathbb{R}^M} = -q/b_1 (G(X),B X)_{\mathbb{R}^M}/|G(X)|_{\mathbb{R}^{M}}
\]
but by assumption $(G(X),B X)>0$ for $|X|_{\mathbb{R}^M}\ge q$, which leads to a
contradiction. It follows that the finite dimensional nonlinear system
admits at least one solution.

Uniqueness is consequence of the positivity results of Lemma
\ref{lem:apriori} and the monotonicity of Lemma \ref{lem:monotone}.
Considering first formulation 1, where the form $b[\cdot;\cdot]$ is
given by \eqref{stab_form1}, we have
\begin{align*}
&\|\nabla (u_1-u_2)\|_\Omega^2 
\\
&\qquad = -\gamma^{-1}
\left<[P_{\gamma+}(u_1,\lambda_1)]_+-[P_{\gamma+}(u_2,\lambda_2)]_+,
 u_1-u_2+\gamma(\lambda_1-\lambda_2)\right>_C 
\\
&\qquad \qquad - \gamma \|\lambda_1-\lambda_2\|_C^2 - |\lambda_1-\lambda_2|_s^2.
\end{align*}
It follows that, defining
\[
|||u,\lambda|||^2:= \|\nabla u\|_\Omega^2 + |\lambda|_s^2 ,
\]
\begin{align*}
&|||u_1-u_2,\lambda_1-\lambda_2|||^2 
\\
&\qquad 
= - \gamma \|\lambda_1-\lambda_2\|_C^2
\\
&\qquad \qquad 
-\gamma^{-1}\left<[P_{\gamma+}(u_1,\lambda_1)]_+-[P_{\gamma+}(u_2,\lambda_2)]_+,
  P_{\gamma+}(u_1-u_2,\lambda_1-\lambda_2)\right>_C
\\
&\qquad \qquad 
+ \gamma^{-1}\left<[P_{\gamma+}(u_1,\lambda_1)]_+-[P_{\gamma+}(u_2,\lambda_2)]_+,
  2 \gamma(\lambda_1-\lambda_2)\right>_C.
\end{align*}
Then, using the monotonicity of Lemma \ref{lem:monotone} we deduce
\begin{align*}
&|||u_1-u_2,\lambda_1-\lambda_2|||^2 + \gamma \|\lambda_1-\lambda_2\|_C^2
\\ 
&\qquad \qquad + \gamma^{-1}
\|[P_{\gamma+}(u_1,\lambda_1)]_+-[P_{\gamma+}(u_2,\lambda_2)]_+\|^2_C 
\\
&\qquad \leq - \left<[P_{\gamma+}(u_1,\lambda_1)]_+-[P_{\gamma+}(u_2,\lambda_2)]_+,
  2 \gamma(\lambda_1-\lambda_2)\right>_C.
\end{align*}
Therefore
\begin{equation}\label{part_unique}
|||u_1-u_2,\lambda_1-\lambda_2|||^2 + \gamma^{-1}
\|(\lambda_1-\lambda_2)
+[P_{\gamma+}(u_1,\lambda_1)]_+-[P_{\gamma+}(u_2,\lambda_2)]_+\|^2_C = 0
\end{equation}
and $u_1 = u_2$. Repeating the arguments leading to \eqref{eq:multfinal} on
$\lambda_1-\lambda_2$ and using \eqref{part_unique} allows us to conclude that
$\lambda_1=\lambda_2$.

In the case of formulation 2
we only give the details for $k\ge 2$, the case $k = 1$ is similar,
but we need to handle an additional stabilization term. Assume that $(u_1,\lambda_1)$ and
$(u_2,\lambda_2)$ solves \eqref{CompFEM} with the contact
conditions defined by \eqref{infsup_stabform1}. 
\begin{align*}
\|\nabla (u_1-u_2)\|_\Omega^2 = {}& \left<\lambda_1 - \lambda_2,u_1-u_2
\right>_C\\
= {}& -\gamma \left<\lambda_1 - \lambda_2,[P_{\gamma-} (u_1,\lambda_1)]_+ -
  [P_{\gamma-} (u_2,\lambda_2)]_+
\right>_C \\
{}&  - |\lambda_1 - \lambda_2|^2_s.
\end{align*}
Observing that with $\mu_h = \gamma^{-1}\pi_1 (u_1 - u_2)$ we also
have
\begin{multline*}
\gamma^{-1} \|\pi_1 (u_1 - u_2)\|_C^2 + \gamma^{-1} \left<\pi_1 (u_1 -
u_2), [P_{\gamma-} (u_1,\lambda_1)]_+ -
  [P_{\gamma-} (u_2,\lambda_2)]_+
\right>_C = 0
\end{multline*}
and therefore we can write
\begin{multline*}
\|\nabla (u_1-u_2)\|_\Omega^2 + \gamma^{-1} \|u_1-u_2 +  [P_{\gamma-} (u_1,\lambda_1)]_+ -
  [P_{\gamma-} (u_2,\lambda_2)]_+\|_C^2 +  |\lambda_1 - \lambda_2|^2_s\\
  = \gamma^{-1} \left<(1-\pi_1) (u_1 - u_2), u_1-u_2 +  [P_{\gamma-} (u_1,\lambda_1)]_+ -
  [P_{\gamma-} (u_2,\lambda_2)]_+\right>_C.
\end{multline*}
By splitting the term in the right hand side using the
arithmetic-geometric inequality and using the approximation properties
of $\pi_1$, $$ \|(1-\pi_1)
(u_1 - u_2)\|_C \leq c_1 h^{s} \|\nabla
(u_1-u_2)\|_\Omega$$ we may conclude that
\begin{align*}
&(1 - \gamma^{-1} c_1^2 h^{2s}) \|\nabla (u_1-u_2)\|_\Omega^2 
\\
&\qquad \qquad + \frac12
\gamma^{-1} \|u_1-u_2 +  [P_{\gamma-} (u_1,\lambda_1)]_+ -
  [P_{\gamma-} (u_2,\lambda_2)]_+\|_C^2 
\\
&\qquad \qquad
+  |\lambda_1 -
  \lambda_2|^2_s
\\  
&\qquad  \leq 0.
\end{align*}
As a consequence $u_1=u_2$ when $\gamma_0$ is sufficiently large. That $\lambda_1=\lambda_2$ is immediate
from \eqref{eq:1st_lam} since the first equation of \eqref{eq:stat_point2} is linear.
\end{proof}

\section{Error estimates}
In this section we will prove the main results of the paper which are
error estimates for the two methods given by \eqref{FEM}
with the two contact formulations \eqref{infsup_stabform1} and
\eqref{stab_form1}. The idea of the proof is to combine the uniqueness
argument with a Galerkin type perturbation analysis. Since this result
is central to the present work we give full detail for both
formulations. 
\begin{theorem}\label{apriori_error_1}(Formulation 1)
Assume that $u \in H^1(\Omega)$ and $\lambda \in L^2(C)$ is the unique
stationary point of \eqref{functional} and $(u_h, \lambda_h)$ the solution to \eqref{FEM}
with \eqref{stab_form0} and $0<\gamma =\gamma_0 h^{2s}$, where $s=1/2$ for the Signorini problem and $s=1$
for the Obstacle problem, $\gamma_0 \in\mathbb{R}^+$ is sufficiently small and $\delta
\in\mathbb{R}^+$ sufficiently large.
 Then there holds for all $(v_h, \mu_h) \in
V_h \times \Lambda_h$
\begin{multline*}
\alpha \|u-u_h\|_{H^1(\Omega)}^2  + \gamma\| (\lambda -
  \lambda_h)\|_{C}^2 + \gamma \|(\lambda + \gamma^{-1}
[P_{\gamma+}(u_h,\lambda_h)]_+)\|^2_{C} \\
\lesssim \frac{1}{\alpha}
\|u-v_h\|_{H^1(\Omega)}^2 + \gamma\| (\lambda -
  \mu_h)\|_{C}^2 + \gamma^{-1}\| (u - v_h)\|^2_{C} + s(\mu_h,\mu_h).
\end{multline*}
\end{theorem}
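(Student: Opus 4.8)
\emph{Setup.} Write $P:=P_{\gamma+}(u,\lambda)=u-\gamma\lambda$, $P_h:=P_{\gamma+}(u_h,\lambda_h)=u_h-\gamma\lambda_h$ and $\Delta:=[P]_+-[P_h]_+$. Since $(u,\lambda)$ is a stationary point, the second equation of \eqref{eq:stat_point1} gives $\lambda+\gamma^{-1}[P]_+=0$ a.e.\ on $C$, hence $\lambda+\gamma^{-1}[P_h]_+=-\gamma^{-1}\Delta$: the last term in the claimed estimate is precisely $\gamma\|\lambda+\gamma^{-1}[P_h]_+\|_C^2=\gamma^{-1}\|\Delta\|_C^2$, and the discrete residual $\psi_h:=\lambda_h+\gamma^{-1}[P_h]_+$ satisfies $\psi_h=-(\lambda-\lambda_h)-\gamma^{-1}\Delta$. \emph{Step 1 (quasi-consistency).} The continuous solution satisfies $a(u,v_h)+\langle\gamma^{-1}[P]_+,v_h\rangle_C=(f,v_h)_\Omega$ and $\langle\lambda+\gamma^{-1}[P]_+,\mu_h\rangle_C=0$, i.e.\ $A_h$ evaluated at $(u,\lambda)$ with the term $s(\cdot,\cdot)$ removed equals $(f,v_h)_\Omega$. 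Subtracting \eqref{CompFEM} with $b$ given by \eqref{stab_form0} yields, for all $(v_h,\mu_h)\in V_h\times\Lambda_h$,
\[
a(u-u_h,v_h)+\gamma^{-1}\langle\Delta,v_h+\gamma\mu_h\rangle_C+\gamma\langle\lambda-\lambda_h,\mu_h\rangle_C=s(\lambda_h,\mu_h).
\]
Only $\lambda_h$ (never the rough $\lambda$, whose face jumps are ill-defined) enters $s(\cdot,\cdot)$, so this is meaningful; moreover the defect $s(\lambda_h,\mu_h)$ will supply the $|\cdot|_s$ coercivity below.

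\emph{Step 2 (energy identity and multiplier control).} Fix $(v_h,\mu_h)$ and set $\chi_u:=v_h-u_h\in V_h$, $\chi_\lambda:=\mu_h-\lambda_h\in\Lambda_h$, $\rho_u:=u-v_h$, $\rho_\lambda:=\lambda-\mu_h$. Testing Step~1 with $(\chi_u,\chi_\lambda)$, writing $\chi_u+\gamma\chi_\lambda=(P-P_h)+2\gamma(\lambda-\lambda_h)-(\rho_u+\gamma\rho_\lambda)$, using $s(\lambda_h,\chi_\lambda)=s(\mu_h,\chi_\lambda)-|\chi_\lambda|_s^2$, and invoking the first inequality of Lemma~\ref{lem:monotone} as $\langle\Delta,P-P_h\rangle_C\ge\|\Delta\|_C^2$, one reaches an identity whose left side is bounded below by $\|\nabla(u-u_h)\|_\Omega^2+|\chi_\lambda|_s^2$ plus the positive semidefinite combination $\gamma^{-1}\|\Delta\|_C^2+2\langle\Delta,\lambda-\lambda_h\rangle_C+\gamma\|\lambda-\lambda_h\|_C^2=\gamma\|\psi_h\|_C^2$, and whose right side is $s(\mu_h,\chi_\lambda)$ plus the three interpolation terms $a(u-u_h,\rho_u)$, $\gamma^{-1}\langle\Delta,\rho_u+\gamma\rho_\lambda\rangle_C$, $\gamma\langle\lambda-\lambda_h,\rho_\lambda\rangle_C$. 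Independently, test Step~1 with $(R_h,0)$, where (exactly as in Lemma~\ref{lem:apriori}) $R_h\in V_h$ is the lift from Lemma~\ref{lem:lift} of $\gamma I_{cf}(\xi_h\chi_\lambda)$; inserting $\gamma^{-1}\Delta=-(\lambda-\lambda_h)-\psi_h$ and repeating the computation \eqref{eqrepeat}--\eqref{eq:multfinal}, i.e.\ using \eqref{eq:xicont}, the $I_{cf}$-stability and trace bounds of Proposition~\ref{lem:jump_cont}, the lift bound of Lemma~\ref{lem:lift}, $h^{-2s}\gamma=\gamma_0$, and $\|h^{1/2}\jump{\chi_\lambda}\|_{\mathcal F_C}^2=(\delta\gamma)^{-1}|\chi_\lambda|_s^2$, gives $c_\xi^2\gamma\|\chi_\lambda\|_C^2\lesssim\gamma_0\|\nabla(u-u_h)\|_\Omega^2+\gamma\|\psi_h\|_C^2+\gamma\|\rho_\lambda\|_C^2+\delta^{-1}|\chi_\lambda|_s^2$.

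\emph{Step 3 (absorption and conclusion).} Bound the right-hand side of the energy identity by Cauchy--Schwarz and arithmetic--geometric inequalities: $s(\mu_h,\chi_\lambda)\le\tfrac12 s(\mu_h,\mu_h)+\tfrac12|\chi_\lambda|_s^2$; $a(u-u_h,\rho_u)\le\epsilon\|\nabla(u-u_h)\|_\Omega^2+C_\epsilon\|u-v_h\|_{H^1(\Omega)}^2$; and, using $\|\Delta\|_C\le\|P-P_h\|_C\le\|u-u_h\|_C+\gamma\|\lambda-\lambda_h\|_C$ from the second inequality of Lemma~\ref{lem:monotone} and $\|\cdot\|_C\lesssim\|\cdot\|_{H^1(\Omega)}$, $\gamma^{-1}\langle\Delta,\rho_u+\gamma\rho_\lambda\rangle_C+\gamma\langle\lambda-\lambda_h,\rho_\lambda\rangle_C\le\epsilon(\gamma^{-1}\|\Delta\|_C^2+\gamma\|\lambda-\lambda_h\|_C^2)+C_\epsilon(\gamma^{-1}\|u-v_h\|_C^2+\gamma\|\lambda-\mu_h\|_C^2)$. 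Replacing $\lambda-\lambda_h$ by $\chi_\lambda+\rho_\lambda$ and $\gamma^{-1}\|\Delta\|_C^2=\gamma\|\psi_h+\chi_\lambda+\rho_\lambda\|_C^2\lesssim\gamma\|\psi_h\|_C^2+\gamma\|\chi_\lambda\|_C^2+\gamma\|\rho_\lambda\|_C^2$, and adding a small multiple $\kappa$ of the $R_h$-estimate of Step~2, the terms $\epsilon\|\nabla(u-u_h)\|_\Omega^2$, $\kappa\gamma_0\|\nabla(u-u_h)\|_\Omega^2$, $\kappa\gamma\|\psi_h\|_C^2$, $(\epsilon+\kappa\delta^{-1})|\chi_\lambda|_s^2$, $\epsilon\gamma\|\chi_\lambda\|_C^2$ are absorbed into the left side on choosing, in this order, $\kappa$ small, then $\epsilon$ small, then $\gamma_0$ small (and $\delta$ large) --- precisely the stated hypotheses on the parameters. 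One obtains
\[
\|\nabla(u-u_h)\|_\Omega^2+\gamma\|\psi_h\|_C^2+\gamma\|\chi_\lambda\|_C^2\lesssim\tfrac1\alpha\|u-v_h\|_{H^1(\Omega)}^2+\gamma^{-1}\|u-v_h\|_C^2+\gamma\|\lambda-\mu_h\|_C^2+s(\mu_h,\mu_h),
\]
the factor $\tfrac1\alpha$ coming from the arithmetic--geometric splitting of $a(u-u_h,\rho_u)$. Finally $\gamma\|\lambda-\lambda_h\|_C^2\le2\gamma\|\chi_\lambda\|_C^2+2\gamma\|\rho_\lambda\|_C^2$, $\gamma^{-1}\|\Delta\|_C^2\lesssim\gamma\|\psi_h\|_C^2+\gamma\|\chi_\lambda\|_C^2+\gamma\|\rho_\lambda\|_C^2$, and $\|u-u_h\|_{H^1(\Omega)}^2\lesssim\|\nabla(u-u_h)\|_\Omega^2$ by Poincar\'e (as $u-u_h$ vanishes on $\Gamma_D$), which together give the claimed inequality.

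\emph{Main obstacle.} The crux is that monotonicity by itself controls only the semidefinite combination $\gamma\|\psi_h\|_C^2=\gamma\|\gamma^{-1}\Delta+(\lambda-\lambda_h)\|_C^2$, not $\gamma^{-1}\|\Delta\|_C^2$ and $\gamma\|\lambda-\lambda_h\|_C^2$ separately; recovering these forces one to produce $\gamma\|\chi_\lambda\|_C^2$ independently via the $R_h$-lift (the device already used in Lemma~\ref{lem:apriori}) and then to order the smallness choices for $\kappa,\epsilon,\gamma_0,\delta$ consistently. It is equally essential that the good-sign $|\chi_\lambda|_s^2$ generated by the quasi-consistency defect $s(\lambda_h,\chi_\lambda)$ alone suffices to absorb every $|\cdot|_s$ contribution, so that no bound on $|\lambda_h|_s$ is ever needed.
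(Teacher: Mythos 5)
Your proof is correct, and while it uses the same ingredients as the paper (Galerkin orthogonality, the monotonicity of $[\cdot]_+$, and the lift $R_h$ built from $I_{cf}$ and $\xi_h$ via Lemma \ref{lem:lift} and \eqref{eq:xicont}), it organizes the key estimate differently. The paper tests the orthogonality relation with $(v_h-u_h,\mu_h-\lambda_h)$, extracts $-\gamma\|\mu_h-\lambda_h\|_C^2$ directly from the bilinear block $\left<\gamma(\lambda_h-\lambda),\mu_h-\lambda_h\right>_C$, and then splits the nonlinear residual into $I+II+III$: monotonicity kills $II$, but the cross term $III=\left<\gamma^{-1}\Delta,2\gamma(\mu_h-\lambda_h)\right>_C$ must be beaten down using Assumption \ref{ass:infsup} (term $IIIa$), Proposition \ref{lem:jump_cont} (term $IIIb$) and the lift (term $IIIc$), which is exactly where the constraint $\varepsilon_1+c_D+C^2\gamma_0/\alpha<1$ and the requirements ``$\gamma_0$ small, $\delta$ large'' originate. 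You instead complete the square, so that $\gamma^{-1}\|\Delta\|_C^2+2\left<\Delta,\lambda-\lambda_h\right>_C+\gamma\|\lambda-\lambda_h\|_C^2$ collapses to $\gamma\|\lambda_h+\gamma^{-1}[P_h]_+\|_C^2$ with no leftover cross term, thereby bypassing Assumption \ref{ass:infsup} altogether in the error analysis, and you recover $\gamma\|\mu_h-\lambda_h\|_C^2$ from a separate test with $(R_h,0)$ — precisely the device of Lemma \ref{lem:apriori}, transplanted to the error equation. The trade-off: the paper gets the multiplier control ``for free'' from the $\left<\gamma\lambda_h,\mu_h\right>_C$ block but pays with the quadrature assumption and a delicate balance of $\varepsilon_1,\dots,\varepsilon_4,c_D,\gamma_0,\delta$; your version costs one extra application of the lift but yields a cleaner bookkeeping of the smallness conditions (indeed, since the troublesome contributions $\kappa\gamma_0\|\nabla(u-u_h)\|_\Omega^2$ and $\kappa\delta^{-1}|\mu_h-\lambda_h|_s^2$ carry the free factor $\kappa$, your route arguably works for any fixed $\gamma_0,\delta>0$, a mild strengthening of the stated hypotheses). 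The only cosmetic discrepancy is that your squared residual is $\gamma\|\lambda_h+\gamma^{-1}[P_h]_+\|_C^2$ rather than the theorem's $\gamma\|\lambda+\gamma^{-1}[P_h]_+\|_C^2$, but you correctly convert between the two via $\gamma\|\lambda-\lambda_h\|_C^2$, which is also controlled.
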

\begin{proof}
Using the coercivity of $a(\cdot,\cdot)$ we may write
\begin{align*}
\alpha \|u-u_h\|_{H^1(\Omega)}^2 &\leq  a(u-u_h,u-u_h) 
\\
&= a(u-u_h,u -
v_h) +a(u-u_h,v_h-u_h) 
\\
&\leq \frac{\alpha}{4} \|u-u_h\|_{H^1(\Omega)}^2
+ \frac{1}{\alpha} \|u-v_h\|_{H^1(\Omega)}^2 +a(u-u_h,v_h-u_h).
\end{align*}
It follows, using Galerkin orthogonality, that
\begin{align}\nonumber
&a(u-u_h,v_h-u_h) 
\\ \nonumber
&\qquad =  \left<\gamma^{-1}
  [P_{\gamma+}(u_h,\lambda_h)]_+-\gamma^{-1} [P_{\gamma+}(u,\lambda)]_+,
  v_h-u_h \right>_{C}
  \\ \nonumber
&\qquad = \left<\gamma^{-1}
  [P_{\gamma+}(u_h,\lambda_h)]_+-\gamma^{-1} [P_{\gamma+}(u,\lambda)]_+,
  v_h-u_h+\gamma(\mu_h-\lambda_h) \right>_{C} 
  \\ \label{eq:GO_pert}
&\qquad \qquad +
\left<\gamma (\lambda_h - \lambda), (\mu_h - \lambda_h) \right>  
+ s(\lambda_h,\mu_h - \lambda_h).
\end{align}
First observe that
\begin{align*}
\left<\gamma (\lambda_h - \lambda), (\mu_h - \lambda_h) \right> = {}& -
\|\gamma^{\frac12}(\mu_h - \lambda_h )\|^2_C \\
{}& + \|\gamma^{\frac12}
(\mu_h - \lambda)\|_C\|\gamma^{\frac12} (\mu_h - \lambda_h)\|_C \\
\leq {}& \left(\varepsilon_1 -1\right) \|\gamma^{\frac12}(\mu_h -
\lambda_h )\|^2_C + \frac{1}{4\varepsilon_1} \|\gamma^{\frac12}
(\mu_h - \lambda)\|_C^2
\end{align*}
where we see that the first term can be made negative by choosing
$\varepsilon_1$ small enough. Similarly
\[
s(\lambda_h,\mu_h - \lambda_h) = -
|\mu_h -\lambda_h|^2_s + s(\mu_h,\mu_h - \lambda_h)
\leq (\varepsilon_2 -1) |\mu_h -\lambda_h|^2_s + \frac{1}{4\varepsilon_2} s(\mu_h,\mu_h)
\]
where once again the first term on the right hand side can be made
negative by choosing $\varepsilon$ small.
Considering the first term on the right hand side of equation
\eqref{eq:GO_pert} we may write
\begin{align*}
&\left<\gamma^{-1}
  [P_{\gamma+}(u_h,\lambda_h)]_+-\gamma^{-1} [P_{\gamma+}(u,\lambda)]_+,
   v_h-u_h+\gamma(\mu_h-\lambda_h) \right>_{C} 
   \\
&\qquad 
= \underbrace{\left<\gamma^{-1}
  [P_{\gamma+}(u_h,\lambda_h)]_+-\gamma^{-1} [P_{\gamma+}(u,\lambda)]_+,
  P_{\gamma+}(v_h-u,\mu_h-\lambda) \right>_{C} }_{I}
  \\
&\qquad \qquad 
+ \underbrace{\left<\gamma^{-1}
  [P_{\gamma+}(u_h,\lambda_h)]_+-\gamma^{-1} [P_{\gamma+}(u,\lambda)]_+,
  P_{\gamma+}(u-u_h,\lambda-\lambda_h) \right>_{C} }_{II} 
  \\
&\qquad \qquad
+ \underbrace{\left<\gamma^{-1}
  [P_{\gamma+}(u_h,\lambda_h)]_+-\gamma^{-1} [P_{\gamma+}(u,\lambda)]_+,2
  \gamma(\mu_h-\lambda_h) \right>_{C}}_{III}
\\
&\qquad = I+II+III
\end{align*}
The term $I$ may be bounded using the Cauchy-Schwarz inequality
followed by the arithmetic geometric inequality
\[
I \leq \varepsilon_3 \|\lambda+ \gamma^{-\frac12}
[P_{\gamma+}(u_h,\lambda_h)]_+\|_{C}^2 + \frac{1}{4 \varepsilon_3}
\|\gamma^{-\frac12} P_{\gamma+}(v_h-u,\mu_h-\lambda)\|_{C}^2.
\]
For the term $II$ we use the monotonicity property $([a]_+-[b]_+)(b-a)
\leq -([a]_+-[b]_+)^2$ to deduce that
\[
II \leq -\|\gamma^{\frac12}(\lambda + \gamma^{-1}
[P_{\gamma+}(u_h,\lambda_h)]_+)\|^2_C
\]
Finally to estimate term $III$, let $R_h$ be defined by Lemma \ref{lem:lift} with the associated
$r_h :=I_{cf} (2
 \xi_h \gamma(\mu_h-\lambda_h))$ and set $\zeta_h = 1-\xi_h$.
Using the equation we may write
\begin{align*}
&\left<\gamma^{-1}
  [P_{\gamma+}(u_h,\lambda_h)]_+-\gamma^{-1} [P_{\gamma+}(u,\lambda)]_+,2
  \gamma(\mu_h-\lambda_h) \right>_{C} 
\\
&\qquad 
\leq \underbrace{\left<\gamma^{-1}
  [P_{\gamma+}(u_h,\lambda_h)]_+-\gamma^{-1} [P_{\gamma+}(u,\lambda)]_+,2
  \zeta_h \gamma(\mu_h-\lambda_h) \right>_{\Gamma_C \cap G_h}}_{IIIa} 
  \\
&\qquad \qquad +  \underbrace{\left<\gamma^{-1}
  [P_{\gamma+}(u_h,\lambda_h)]_+-\gamma^{-1} [P_{\gamma+}(u,\lambda)]_+,2
  \xi_h \gamma(\mu_h-\lambda_h) -r_h\right>_{\Gamma_C }}_{IIIb}
  \\
&\qquad \qquad + \underbrace{a(u-u_h, R_h)}_{IIIc} 
\\
&\qquad = IIIa+IIIb+IIIc.
\end{align*}
We estimate $IIIa$-$IIIc$ term by term. For $IIIa$ we use the assumption \ref{ass:infsup}
\[
IIIa \leq c_D \|\gamma^{-\frac12} (
[P_{\gamma+}(u_h,\lambda_h)]_++\lambda)\|_{C}^2 + c_D \|\gamma^{\frac12} (\mu_h-\lambda_h)\|_C^2
\]
As a consequence of Lemma \ref{lem:jump_cont} we get the following
bound of term $II$
\begin{equation*}
IIIb \leq
\varepsilon_4 \|\gamma^{-\frac12}(\lambda +
[P_{\gamma+}(u_h,\lambda_h)]_+) \|^2_C+
\frac{c_s^2 \gamma}{\varepsilon_4}  \|h^{\frac12}\jump{\mu_h -
  \lambda_h}\|_{\mathcal{F}}^2.
\end{equation*}
For the third term we observe that by the continuity of $a$ and
Lemma \ref{lem:lift} we have
\begin{align*}
IIIc \leq {}& \|u-u_h\|_{H^1(\Omega)} \|R_h\|_{H^1(\Omega)} \leq C 
\|u-u_h\|_{H^1(\Omega)} h^{-s} \|r_h\|_C \\
\leq {}& C 
\|u-u_h\|_{H^1(\Omega)} h^{-s} \gamma^{\frac12} \|\gamma^{\frac12}(
\lambda_h - \mu_h)\|_C \\
\leq {}& \frac{\alpha}{4} \|u-u_h\|_{H^1(\Omega)}^2 +  C^2 h^{-2s}
  \gamma \alpha^{-1}\|\gamma^{\frac12}(
\lambda_h - \mu_h)\|_C^2.
\end{align*}
Collecting the above bounds and recalling that by definition
$$s(\lambda_h,\lambda_h) = \delta \gamma\|h^{\frac12} \jump{\lambda_h}\|^2_{\mathcal{F}_C},$$ we have
\begin{align*}
&\frac{\alpha}{2}\|u-u_h\|_{H^1(\Omega)}^2 
+ (1-\varepsilon_3-\varepsilon_4-c_D) \|\gamma^{-\frac12}(\lambda +
[P_{\gamma+}(u_h,\lambda_h)]_+) \|^2_C 
\\
&\qquad \qquad + (1-\varepsilon_1 - c_D- C^2
\gamma_0/\alpha)  \|\gamma^{\frac12}
(\mu_h - \lambda_h)\|_C^2 
\\
&\qquad \qquad 
+ (1-\varepsilon_2- c_s^2/(\delta \varepsilon_4)) |\mu_h -\lambda_h|^2_s
\\
&\qquad 
\leq \frac{1}{\alpha} \|u-v_h\|_{H^1(\Omega)}^2 
+  \frac{1}{4 \varepsilon_3}
\|\gamma^{-\frac12} P_{\gamma+}(v_h-u,\mu_h-\lambda)\|_{C}^2
\\
&\qquad \qquad +  \frac{1}{4\varepsilon_1} \|\gamma^{\frac12}
(\mu_h - \lambda)\|_C^2+\frac{1}{4\varepsilon_2} s(\mu_h,\mu_h)
\end{align*}
Observe that as usual when a continuous multiplier space is used all
terms and coefficients associated to the jump operator may be omitted.

Fixing $\varepsilon_1,\varepsilon_3, \varepsilon_4$ and $\gamma_0$
sufficiently small so that
$$\varepsilon_1 + C^2
\gamma_0 /\alpha = \varepsilon_3+\varepsilon_4=(1-c_D)/2,$$ and
$\varepsilon_2$ sufficiently small and $\delta$ sufficiently large so
that $\varepsilon_2 + c_s^2/(\delta \varepsilon_4)<1$, then there holds
\begin{align*}
&\alpha\|u-u_h\|_{H^1(\Omega)}^2 
+\|\gamma^{-\frac12}(\lambda +
[P_{\gamma+}(u_h,\lambda_h)]_+) \|^2_C 
\\
&\qquad \qquad 
+ \|\gamma^{\frac12} (\mu_h - \lambda_h)\|_C^2 + |\mu_h -\lambda_h|^2_s
\\
&\qquad 
\lesssim \frac{1}{ \alpha} \|u-v_h\|_{H^1(\Omega)}^2+  
\|\gamma^{-\frac12} P_{\gamma+}(v_h-u,\mu_h-\lambda)\|_{C}^2
\\
&\qquad \qquad +  \|\gamma^{\frac12}
(\mu_h - \lambda)\|_C^2+ s(\mu_h,\mu_h).
\end{align*}
The triangle inequality $\|\gamma^{\frac12}
(\lambda - \lambda_h)\|_C^2 \leq \|\gamma^{\frac12}
(\mu_h - \lambda_h)\|_C^2 +\|\gamma^{\frac12}
(\mu_h - \lambda)\|_C^2 $ concludes the proof.
\end{proof}
\begin{cor}\label{cor:error_est}
Assume that $u \in H^{r}(\Omega)$, $3/2< r \leq k+1$ and $\lambda \in
H^{r-1 - s}(\OmegaC)$, with $r-1 - s>0$ where $s=1/2$ for the Signorini problem and $s=1$
for the Obstacle problem and that $(u_h,\lambda_h)$ is the solution of
\eqref{FEM} with the contact operator defined by \eqref{stab_form0} and under the same conditions on the parameters as in Theorem
\ref{apriori_error_1}. Then there holds
\begin{align*}
&\alpha \|u-u_h\|_{H^1(\Omega)}  +\gamma^{\frac12}\| (\lambda -
  \lambda_h)\|_{C} + \gamma^{\frac12} \|(\lambda + \gamma^{-1}
[P_{\gamma+}(u_h,\lambda_h)]_+)\|_{C} 
\\
&\qquad \lesssim h^{r-1} (|u|_{H^{r}(\Omega)} + |\lambda|_{H^{r-1 - s}(\OmegaC)}),
\end{align*}
\end{cor}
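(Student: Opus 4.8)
The plan is to apply Theorem~\ref{apriori_error_1} with a carefully chosen pair $(v_h,\mu_h)$, namely interpolants of the exact solution, and then estimate each term on the right-hand side by standard approximation theory. Concretely, I would take $v_h = i_h u \in V_h$ the Scott--Zhang (or Lagrange) interpolant of $u$, which satisfies $\|u - i_h u\|_{H^1(\Omega)} \lesssim h^{r-1} |u|_{H^r(\Omega)}$ since $3/2 < r \le k+1$; and for the multiplier I would take $\mu_h = \pi_h \lambda \in \Lambda_h$, the $L^2(C)$-projection (or a quasi-interpolant) onto the discontinuous piecewise-polynomial space of degree $l = k-1$, which gives $\|\lambda - \pi_h \lambda\|_C \lesssim h^{r-1-s}|\lambda|_{H^{r-1-s}(\OmegaC)}$ by the approximation properties on $C$ (which sits in $\mathbb{R}^{d-1}$ for Signorini and $\mathbb{R}^d$ for the obstacle problem, accounting for the extra $s$ in the exponent via the trace/dimension of $C$).

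With this choice the four terms on the right of Theorem~\ref{apriori_error_1} are handled as follows. First, $\alpha^{-1}\|u - v_h\|_{H^1(\Omega)}^2 \lesssim \alpha^{-1} h^{2(r-1)}|u|_{H^r(\Omega)}^2$ directly. Second, since $\gamma = \gamma_0 h^{2s}$, the term $\gamma \|\lambda - \mu_h\|_C^2 \lesssim h^{2s} h^{2(r-1-s)} |\lambda|_{H^{r-1-s}}^2 = h^{2(r-1)}|\lambda|_{H^{r-1-s}}^2$, so the powers match. Third, $\gamma^{-1}\|u - v_h\|_C^2$: here I would use a trace inequality to pass from $\|u - v_h\|_C$ (on $\Gamma_C$ for Signorini) to the bulk, gaining the factor $h^{1/2}$ when $C \subset \partial\Omega$, so that $\|u - v_h\|_C \lesssim h^{-s+1/2}\cdot h^{r-1}|u|_{H^r}$ in a suitable interpolation sense; more carefully one estimates $\|u-v_h\|_C \lesssim h^{s}\|u-v_h\|_{H^1}^{?}$ — the clean way is to note the interpolation error on $C$ itself is $\lesssim h^{r-1+s}$ (gaining $h^s$ over the $H^1$ bound because $C$ is lower-dimensional / one uses the $L^2(C)$ rather than $H^{1/2}$ norm), whence $\gamma^{-1}\|u-v_h\|_C^2 \lesssim h^{-2s} h^{2(r-1+s)} = h^{2(r-1)} $. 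Fourth, for $s(\mu_h,\mu_h) = |\pi_h\lambda|_s^2$ I would use that $s$ penalizes jumps of $\mu_h$ across faces; since $\lambda$ itself is single-valued, $\jump{\pi_h\lambda} = \jump{\pi_h\lambda - \lambda}$, and then $|\pi_h\lambda|_s^2 = \sum_F \delta\gamma\int_F h\jump{\pi_h\lambda - \lambda}^2 \lesssim \delta\gamma h\cdot h^{-1}\|\pi_h\lambda - \lambda\|_C^2 \lesssim \delta h^{2s} h^{2(r-1-s)}|\lambda|_{H^{r-1-s}}^2 = \delta h^{2(r-1)}|\lambda|_{H^{r-1-s}}^2$, using a trace inequality to remove the face integral. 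Collecting, the right-hand side is $\lesssim h^{2(r-1)}(|u|_{H^r}^2 + |\lambda|_{H^{r-1-s}}^2)$, and taking square roots (and absorbing the $\lambda$-independent constants $\delta,\gamma_0$, which are fixed) gives the claimed estimate; the left-hand side of the corollary is exactly the square root of the left-hand side of the theorem.

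The main obstacle I anticipate is the bookkeeping of the fractional-order and dimensional scalings in the $\gamma^{-1}\|u - v_h\|_C^2$ term and in $s(\mu_h,\mu_h)$: one must be careful that the trace from $\Omega$ to $\Gamma_C$ in the Signorini case ($s = 1/2$) interacts correctly with the choice $\gamma = \gamma_0 h^{2s}$, and that the regularity assumption $r - 1 - s > 0$ is exactly what is needed for the $L^2(C)$ approximation estimate for $\lambda$ to hold (so that $\lambda \in H^{r-1-s}(C)$ is meaningful and the projection converges at rate $h^{r-1-s}$). A secondary, purely technical point is justifying the interpolation estimate $\|u - i_h u\|_{L^2(C)} \lesssim h^{r-1+s}|u|_{H^r(\Omega)}$ for $C \subset \partial\Omega$; this follows from a standard trace-interpolation argument but should be stated cleanly. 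Everything else is a routine assembly of the estimates already available (approximation properties of $\pi_0$/$\pi_1$, the trace inequality \eqref{trace_disc}, and Theorem~\ref{apriori_error_1} itself).
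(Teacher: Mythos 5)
Your overall route is the same as the paper's: apply Theorem~\ref{apriori_error_1} with $v_h = i_h u$ and $\mu_h = \pi_l\lambda$, and bound the four right-hand-side terms by standard approximation estimates, using $\gamma = \gamma_0 h^{2s}$ to balance the powers. Your treatment of the first three terms, including the observation that $\|u - v_h\|_{C} \lesssim h^{r-1+s}|u|_{H^r(\Omega)}$ (bulk $L^2$ estimate for the obstacle problem, trace-type estimate gaining $h^{1/2}$ for Signorini) so that $\gamma^{-1}\|u-v_h\|_C^2 \lesssim h^{2(r-1)}$, matches the paper's proof.

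There is, however, a gap in your bound for $s(\mu_h,\mu_h)$. You write $\jump{\pi_h\lambda} = \jump{\pi_h\lambda - \lambda}$ on each face and then remove the face integral with a trace inequality at cost $h^{-1}$. But $\pi_h\lambda - \lambda$ is not a discrete function, so \eqref{trace_disc} does not apply; the continuous trace inequality \eqref{trace} would require control of $\nabla(\pi_h\lambda-\lambda)$ element-wise, i.e.\ essentially $\lambda \in H^1(K)$ (or at least $H^t$, $t>1/2$, just for the jump of $\lambda$ across a face to make sense in $L^2(F)$). The corollary only assumes $\lambda \in H^{r-1-s}(\OmegaC)$ with $r-1-s>0$, and the low-regularity range $0 < r-1-s \le 1/2$ is precisely the one of interest for contact problems, so your argument does not cover the stated hypotheses. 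The repair — and what the paper's (somewhat telegraphic) computation amounts to — is to compare two \emph{discrete} objects: insert a continuous approximation $\tilde\lambda_h \in \Lambda_h \cap C^0(\OmegaC)$ of $\lambda$ (e.g.\ $\pi_1\lambda$ or a Cl\'ement-type interpolant), note $\jump{\pi_l\lambda} = \jump{\pi_l\lambda - \tilde\lambda_h}$ since $\tilde\lambda_h$ has no jumps, apply the discrete trace inequality \eqref{trace_disc} to the discrete difference $\pi_l\lambda - \tilde\lambda_h$, and then the triangle inequality in $L^2(C)$:
\begin{align*}
s(\pi_l\lambda,\pi_l\lambda) &\lesssim \delta\gamma\,\|\pi_l\lambda - \tilde\lambda_h\|_C^2
\lesssim \gamma\bigl(\|\pi_l\lambda - \lambda\|_C^2 + \|\lambda - \tilde\lambda_h\|_C^2\bigr)
\lesssim h^{2s} h^{2(r-1-s)}|\lambda|^2_{H^{r-1-s}(\OmegaC)},
\end{align*}
which only uses $L^2(C)$ approximation of $\lambda$ and hence is valid for every $r-1-s>0$. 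With this modification your proof coincides with the paper's.
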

\begin{proof}
Let $v_h = i_h u$ where $i_h$ denotes the standard nodal interpolant
and let $\mu_h= \pi_l \lambda$ where $\pi_l$ denotes the 
$L^2$-projection.
Using standard approximation estimates and the trace inequality
\eqref{trace}  we may then bound the right
hand side of the estimate of Theorem \ref{apriori_error_1},
\[
\|u-v_h\|_{H^1(\Omega)}^2 \lesssim h^{2(r-1)} |u|^2_{H^{r}(\Omega)},
\]
\[
\gamma^{-1}\| (u - v_h)\|^2_{C} \lesssim \gamma^{-1} h^{2(r+s)}
|u|^2_{H^{r}(\Omega)} \lesssim  h^{2(r-1)} |u|^2_{H^{r}(\Omega)},
\]
\[
\gamma\| (\lambda - \mu_h)\|_{C}^2 \lesssim \gamma h^{2(r-1-s)}
|\lambda|_{H^{r-1 - s}(\OmegaC)} \lesssim  h^{2(r-1)}
|\lambda|_{H^{r-1 - s}(\OmegaC)}.
\]
Finally we have, 
\begin{align*}
 s(\mu_h,\mu_h) = {}& s(\pi_l \lambda - \mu_h,\pi_l \lambda - \mu_h)
 \lesssim \gamma h^{-1} \|\pi_l \lambda - \mu_h\|^2_C \\ \lesssim {}& \gamma 
 (\|\pi_l \lambda - \lambda\|^2 + \| \lambda - \mu_h\|^2_C) \lesssim h^{2
   s + 2(r-1 - s)} |\lambda|^2_{H^{r-1 - s}(\OmegaC)} 
\\
\lesssim {}& h^{2(r-1)} |\lambda|^2_{H^{r-1 - s}(\OmegaC)} 
\end{align*}
and we conclude by taking square roots.
\end{proof}
\begin{theorem}\label{apriori_error_2} (Formulation 2)
Assume that $u \in H^1(\Omega)$ and $\lambda \in L^2(C)$ is the unique
stationary point of \eqref{functional} and $(u_h, \lambda_h)$ the solution to \eqref{CompFEM}
with \eqref{infsup_stabform1} and $\gamma = \gamma_0
  h^{2s}$, where $s=1/2$ for the Signorini problem and $s=1$
for the Obstacle problem, $\gamma_0$ sufficiently large and $\delta>0$ then there holds
for all $(v_h , \mu_h) \in
V_h \times \Lambda_h$
\begin{align}\nonumber
&\alpha \|u-u_h\|_{H^1(\Omega)}^2 + \gamma \|(\lambda - \lambda_h)\|_{C}^2  
\\ \nonumber
&\qquad\qquad +\gamma^{-1}\|(u-u_h)+[P_{\gamma-}(u,\lambda)]_+-[P_{\gamma-}(u_h,\lambda_h)]_+\|^2_{C}
\\ \label{lestimate}
&\qquad 
\lesssim  \frac{1}{\alpha} \|u-v_h\|_{H^1(\Omega)}^2
+\
\gamma\|\mu_h-\lambda\|^2_{C}+
|\mu_h|_s^2 + \gamma^{-1} \|v_h-u\|^2_{C}.
\end{align}
\end{theorem}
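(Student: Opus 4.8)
The plan is to follow the structure of the proof of Theorem \ref{apriori_error_1}, combining the coercivity of $a$, a Galerkin‑perturbation argument and the monotonicity of Lemma \ref{lem:monotone}, but now exploiting the \emph{two} orthogonality relations coming from the two lines of \eqref{infsup_stabform1}. Write $e_d := (u-u_h)+[P_{\gamma-}(u,\lambda)]_+-[P_{\gamma-}(u_h,\lambda_h)]_+$. Testing \eqref{CompFEM} with $(v_h,0)$ and with $(0,\mu_h)$ and subtracting the continuous equations \eqref{eq:stat_point3} gives
\[
a(u-u_h,w_h)=\langle\lambda-\lambda_h,w_h\rangle_C\ \ \forall w_h\in V_h,\qquad
\langle\nu_h,e_d\rangle_C=s(\lambda_h,\nu_h)\ \ \forall\nu_h\in\Lambda_h .
\]
Coercivity of $a$ and an arithmetic–geometric inequality give, exactly as in Theorem \ref{apriori_error_1},
\[
\tfrac{3\alpha}{4}\|u-u_h\|_{H^1(\Omega)}^2\le\tfrac1\alpha\|u-v_h\|_{H^1(\Omega)}^2+a(u-u_h,v_h-u_h),
\]
and by the first relation $a(u-u_h,v_h-u_h)=\langle\lambda-\lambda_h,v_h-u\rangle_C+\langle\lambda-\lambda_h,u-u_h\rangle_C$, where the first summand is bounded by $\varepsilon_0\gamma\|\lambda-\lambda_h\|_C^2+(4\varepsilon_0)^{-1}\gamma^{-1}\|v_h-u\|_C^2$, producing the $\gamma^{-1}\|v_h-u\|_C^2$ contribution.

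The heart is $\langle\lambda-\lambda_h,u-u_h\rangle_C$. Inserting $\pm\mu_h$, applying the second orthogonality relation with $\nu_h=\mu_h-\lambda_h$, using $e_d=(u-u_h)+([P_{\gamma-}(u,\lambda)]_+-[P_{\gamma-}(u_h,\lambda_h)]_+)$, then writing $\gamma(\lambda-\lambda_h)=(P_{\gamma-}(u,\lambda)-P_{\gamma-}(u_h,\lambda_h))+(u-u_h)$ and invoking $\langle a-b,[a]_+-[b]_+\rangle_C\ge\|[a]_+-[b]_+\|_C^2$ from Lemma \ref{lem:monotone}, one obtains after rearrangement — the apparently dangerous $\gamma^{-1}\|u-u_h\|_C^2$ cancels —
\[
\langle\lambda-\lambda_h,u-u_h\rangle_C+\gamma^{-1}\|e_d\|_C^2\le\langle\lambda-\mu_h,e_d\rangle_C+\gamma^{-1}\langle u-u_h,e_d\rangle_C+s(\lambda_h,\mu_h-\lambda_h).
\]
Here $\langle\lambda-\mu_h,e_d\rangle_C\le\tfrac14\gamma^{-1}\|e_d\|_C^2+\gamma\|\lambda-\mu_h\|_C^2$ and $s(\lambda_h,\mu_h-\lambda_h)\le\tfrac12|\mu_h|_s^2-\tfrac12|\lambda_h|_s^2$, while $\gamma^{-1}\langle u-u_h,e_d\rangle_C$ is treated with the second relation and $\nu_h=\gamma^{-1}\pi_i(u-u_h)$ ($i=1$ for $k\ge2$, $i=0$ for $k=1$): since $\|(1-\pi_i)(u-u_h)\|_C\le c_ih^s\|\nabla(u-u_h)\|_\Omega$ and $h^{2s}\gamma^{-1}=\gamma_0^{-1}$, this costs $\tfrac14\gamma^{-1}\|e_d\|_C^2$ plus a multiple of $\gamma_0^{-1}\|\nabla(u-u_h)\|_\Omega^2$ (absorbed into the coercivity term once $\gamma_0$ is large), with the extra term $s(\lambda_h,\gamma^{-1}\pi_0(u-u_h))$ for $k=1$ controlled exactly as in Lemma \ref{lem:apriori}, at the price of a mild bound on $\delta$ in terms of $\gamma_0$. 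Collecting gives, for suitable $c>0$,
\[
c\,\|u-u_h\|_{H^1(\Omega)}^2+\tfrac12\gamma^{-1}\|e_d\|_C^2+\tfrac12|\lambda_h|_s^2\lesssim\tfrac1\alpha\|u-v_h\|_{H^1}^2+\gamma\|\lambda-\mu_h\|_C^2+\gamma^{-1}\|v_h-u\|_C^2+|\mu_h|_s^2+\varepsilon_0\gamma\|\lambda-\lambda_h\|_C^2 .
\]

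It remains to absorb $\varepsilon_0\gamma\|\lambda-\lambda_h\|_C^2$, which is where the multiplier norm enters the left‑hand side, and this is done as for term $III$ of Theorem \ref{apriori_error_1}. Apply the first orthogonality relation with $w_h=R_h$, the lift from Lemma \ref{lem:lift} associated with $r_h=\gamma\,I_{cf}(\mu_h-\lambda_h)$, so $R_h|_C=\gamma\,\xi_hI_{cf}(\mu_h-\lambda_h)$. Splitting $\lambda-\lambda_h=(\lambda-\mu_h)+(\mu_h-\lambda_h)$, the inequality \eqref{eq:xicont} gives $\gamma\langle\mu_h-\lambda_h,\xi_h(\mu_h-\lambda_h)\rangle_C\ge\gamma c_\xi^2\|\mu_h-\lambda_h\|_C^2$; Proposition \ref{lem:jump_cont} bounds the $I_{cf}$‑correction by $\delta^{-1}|\mu_h-\lambda_h|_s^2$ plus an $\varepsilon$‑fraction of $\gamma\|\lambda-\lambda_h\|_C^2$, Assumption \ref{ass:infsup} handles the non‑contact part, and $a(u-u_h,R_h)\le\|u-u_h\|_{H^1}\|R_h\|_{H^1}$ with $\|R_h\|_{H^1}\le C_Rh^{-s}\|r_h\|_C$ and $h^{-s}\gamma=\gamma_0h^s$ yields $a(u-u_h,R_h)\lesssim\gamma_0\|u-u_h\|_{H^1}^2+\tfrac12c_\xi^2\gamma\|\mu_h-\lambda_h\|_C^2$. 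Hence
\[
\tfrac12c_\xi^2\,\gamma\|\mu_h-\lambda_h\|_C^2\lesssim\gamma_0\|u-u_h\|_{H^1}^2+\gamma\|\lambda-\mu_h\|_C^2+\delta^{-1}\bigl(|\mu_h|_s^2+|\lambda_h|_s^2\bigr).
\]
Adding $\theta$ times this to the previous estimate with $\theta$ small enough that $\theta\gamma_0\|u-u_h\|_{H^1}^2$ is absorbed by the coercivity term, then choosing $\varepsilon_0$ small (so that, after $\gamma\|\lambda-\lambda_h\|_C^2\le2\gamma\|\mu_h-\lambda_h\|_C^2+2\gamma\|\lambda-\mu_h\|_C^2$, the $\mu_h-\lambda_h$ part is absorbed by the $\theta c_\xi^2\gamma\|\mu_h-\lambda_h\|_C^2$ term) and $\theta$ small relative to $\delta$ (so $\theta\delta^{-1}|\lambda_h|_s^2$ is absorbed by $\tfrac12|\lambda_h|_s^2$), gives
\[
\alpha\|u-u_h\|_{H^1}^2+\gamma\|\mu_h-\lambda_h\|_C^2+\gamma^{-1}\|e_d\|_C^2\lesssim\tfrac1\alpha\|u-v_h\|_{H^1}^2+\gamma\|\lambda-\mu_h\|_C^2+|\mu_h|_s^2+\gamma^{-1}\|v_h-u\|_C^2,
\]
and a final triangle inequality $\gamma\|\lambda-\lambda_h\|_C^2\le2\gamma\|\mu_h-\lambda_h\|_C^2+2\gamma\|\lambda-\mu_h\|_C^2$ yields \eqref{lestimate}. (When a continuous multiplier space is used, all jump terms drop and Assumption \ref{ass:infsup}, Proposition \ref{lem:jump_cont} are not needed.)

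The main obstacle is the coupled choice of constants: $\gamma_0$ must be large so that the projection‑consistency term $\gamma_0^{-1}c_i^2\|\nabla(u-u_h)\|_\Omega^2$ is dominated, yet the lift argument recovering the multiplier error produces a factor $\gamma_0$ in front of $\|u-u_h\|_{H^1}^2$, forcing the coupling weight $\theta$ to be chosen small depending on $\gamma_0$ and $\alpha$ — only afterwards may $\varepsilon_0$ and $\delta$ be fixed. Getting this order of quantifiers right (and, for $k=1$, closing the bound on the extra stabilization term under a bound $\delta\lesssim\gamma_0$) is the delicate point; it is the exact analogue of the bookkeeping in Lemma \ref{lem:apriori}, and the continuity estimate of Lemma \ref{bcont} guarantees that all pairings used above are well defined.
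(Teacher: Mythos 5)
Your proposal is correct and follows essentially the same route as the paper's own proof: coercivity plus Galerkin perturbation, the monotonicity of Lemma \ref{lem:monotone}, the projection trick with $\nu_h=\gamma^{-1}\pi_i(u-u_h)$ to handle the $\gamma^{-1}$-weighted cross term (requiring $\gamma_0$ large), and recovery of the multiplier error via the lifted test function built from $I_{cf}$, $\xi_h$ and Lemma \ref{lem:lift}, followed by a triangle inequality. The only deviations are cosmetic bookkeeping (you split $\langle\lambda-\lambda_h,v_h-u\rangle_C$ early with an $\varepsilon_0$ and recover $\gamma\|\mu_h-\lambda_h\|_C^2$ rather than $\gamma\|\lambda-\lambda_h\|_C^2$ directly, and retain $|\lambda_h|_s$ instead of $|\mu_h-\lambda_h|_s$ on the left), which do not change the argument.
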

\begin{proof}
Using the coercivity of $a(\cdot,\cdot)$ we may write
\begin{align}\nonumber
\alpha \|u-u_h\|_{H^1(\Omega)}^2 \leq {}& a(u-u_h,u-u_h) = a(u-u_h,u -
v_h) +a(u-u_h,v_h-u_h) \\ \nonumber
 \leq {}& \frac{\alpha}{4} \|u-u_h\|_{H^1(\Omega)}^2
+ \frac{1}{\alpha} \|u-v_h\|_{H^1(\Omega)}^2\\ {}& +a(u-u_h,v_h-u_h).\label{1st_step}
\end{align}
By Galerkin orthogonality we obtain the equality, and then adding and
subtracting suitable quantities it follows that
\begin{align} \nonumber
a(u-u_h,v_h-u_h) = {}& \left<\lambda-\lambda_h,v_h-u_h\right>_{C}
\\ \nonumber
= {}& \left<\lambda-\lambda_h,v_h-u_h\right>_{C} \\ \nonumber {}& -
\left<\mu_h-\lambda_h,u-u_h\right>_{C}+ s(\lambda_h,\mu_h-\lambda_h) \\ \nonumber {}& -
\left<\mu_h-\lambda_h,[P_{\gamma-}(u,\lambda)]_+-[P_{\gamma-}(u_h,\lambda_h)]_+\right>_{C}.
\end{align}
Then we proceed by adding and subtracting $u$ in the right slot of
the first term on the right hand side, $\lambda$ in the left slot
of the second term, $\mu_h$ in the left slot of the third term and
finally $\lambda+\gamma^{-1}(u-u_h)$ in the left slot of the third
term, leading to
\begin{align} \nonumber
a(u-u_h,v_h-u_h) = {}& \left<\lambda-\lambda_h,v_h-u\right>_{C}
-\left<\mu_h-\lambda,u-u_h\right>_{C} \\ \nonumber
{}& -
\left<\mu_h-\lambda,[P_{\gamma-}(u,\lambda)]_+-[P_{\gamma-}(u_h,\lambda_h)]_+\right>_{C}\\ \nonumber
{}&  -
\gamma^{-1}\left<P_{\gamma-}(u,\lambda)-P_{\gamma-}(u_h,\lambda_h),[P_{\gamma-}(u,\lambda)]_+-[P_{\gamma-}(u_h,\lambda_h)]_+\right>_{C} \\ \nonumber
{}& - \left<\gamma^{-1}(u-u_h),[P_{\gamma-}(u,\lambda)]_+-[P_{\gamma-}(u_h,\lambda_h)]_+\right>_{C}\\ \nonumber
{}& - |\mu_h - \lambda_h|_s^2+  s(\mu_h,\mu_h-\lambda_h).\\ \nonumber
\end{align}
We may then apply the monotonicity of Lemma \ref{lem:monotone} to
obtain the bound
\begin{align} \nonumber
a(u-u_h,v_h-u_h) \leq {}& \left<\lambda-\lambda_h,v_h-u\right>_{C}\\ \nonumber
{}& -
\left<\mu_h-\lambda,(u - u_h) +[P_{\gamma-}(u,\lambda)]_+-[P_{\gamma-}(u_h,\lambda_h)]_+\right>_{C}\\ \nonumber
{}&  -
\gamma^{-1} \|[P_{\gamma-}(u,\lambda)]_+-[P_{\gamma-}(u_h,\lambda_h)]_+\|^2_{C}\\ \nonumber
{}& - \gamma^{-1}\left<(u-u_h),[P_{\gamma-}(u,\lambda)]_+-[P_{\gamma-}(u_h,\lambda_h)]_+\right>_{C}\\
{}&  - |\mu_h - \lambda_h|_s^2+  s(\mu_h,\mu_h-\lambda_h).\label{eq:GO_pert0}
\end{align}
Summarizing \eqref{1st_step} and \eqref{eq:GO_pert0} we have
\begin{align}\nonumber
&\frac34 \alpha \|u-u_h\|_{H^1(\Omega)}^2 + \frac34 |\mu_h - \lambda_h|_s^2 
+
\gamma^{-1}
\|[P_{\gamma-}(u,\lambda)]_+-[P_{\gamma-}(u_h,\lambda_h)]_+\|^2_{C}
\\ \nonumber
&\qquad \qquad 
+\gamma^{-1}\left<(u-u_h),[P_{\gamma-}(u,\lambda)]_+-[P_{\gamma-}(u_h,\lambda_h)]_+\right>_{C}
\\ \nonumber
&\qquad \leq 
 \frac{1}{\alpha} \|u-v_h\|_{H^1(\Omega)}^2 + \left<\lambda-\lambda_h,v_h-u\right>_{C}
 \\ \label{2nd_step}
&\qquad \qquad  -
\left<\mu_h-\lambda,(u - u_h) +[P_{\gamma-}(u,\lambda)]_+-[P_{\gamma-}(u_h,\lambda_h)]_+\right>_{C}
+  |\mu_h|_s^2.
\end{align}
Then observe that using the second equation we may write, with $\bar e =
  \pi_i (u - u_h)$, with $i=0$ for $k=0$ and $i=1$ for $k\ge 2$ and taking $\mu_h = \gamma^{-1} \bar e$
\begin{align*}
& \gamma^{-1}\left<\bar
   e,[P_{\gamma-}(u,\lambda)]_+-[P_{\gamma-}(u_h,\lambda_h)]_+\right>_{C}
 + \gamma^{-1} \|\bar e\|^2_{C} 
 \\
 &\qquad \qquad 
- \frac14  |\lambda_h|_s^2 - C \gamma^{-1} \delta h^{2s} \|\nabla (u -
 u_h)\|_{\Omega}^2
 \\
&\qquad 
\leq  \gamma^{-1}\left<\bar
   e,[P_{\gamma-}(u,\lambda)]_+-[P_{\gamma-}(u_h,\lambda_h)]_+\right>_{C}
 + \gamma^{-1} \|\bar e\|^2_{C} + s(\lambda_h, \gamma^{-1} \bar
 e ) 
\\ 
&\qquad  = 0
\end{align*}
where the last term vansihes for $k \ge 2$.
It follows that
\begin{align*}
& \gamma^{-1} \|\bar e\|^2_{C} - 
\frac14 |\lambda_h|_s^2
- C \gamma^{-1}\delta  h^2 \|\nabla (u -
 u_h)\|_{\Omega}^2
 \\
&\qquad \leq - \gamma^{-1}\left<\bar
   e,[P_{\gamma-}(u,\lambda)]_+-[P_{\gamma-}(u_h,\lambda_h)]_+\right>_{C}
\end{align*}
We recall that by the $L^2$-orthogonality there holds $\|\bar
e\|_{C}^2 = \|e\|_{C}^2 -   \|e -\bar e\|_{C}^2$
and therefore
\[
\gamma^{-\frac12} \|e\|^2_{C} \leq \gamma^{-\frac12} \|\bar
e\|^2_{C} + C \gamma^{-1} h^{2s} \|\nabla e\|^2_{\Omega}
\]
and consequently using also that $\|\bar e\|_{C} \leq
\|e\|_{C}$, there exists constants $C$, $c$ independent of $\gamma$
and $h$ such that
\begin{multline}\label{econt}
\frac12 \gamma^{-1} \| e\|^2_{C} - \frac12 \gamma^{-1}\|[P_{\gamma-}(u,\lambda)]_+-[P_{\gamma-}(u_h,\lambda_h)]_+\|^2_{C}\\
-\frac12 |\mu_h - \lambda_h|_s^2
- C( \delta + 1)\gamma^{-1} h^{2s} \|\nabla (u -
 u_h)\|_{\Omega}^2
\leq \frac12 |\mu_h|_s^2
\end{multline}
Collecting the results of equations \eqref{2nd_step},
and \eqref{econt} we have
\begin{align*}
&\left(\frac34 \alpha  - C (\delta +1)\gamma_0^{-1}\right)\|u-u_h\|_{H^1(\Omega)}^2 +
\frac14 |\mu_h - \lambda_h|^2_s 
\\
&\qquad \qquad  
+ \frac12 \gamma^{-1} \| e\|^2_{C}+ 
 \frac12 \gamma^{-1}\|[P_{\gamma-}(u,\lambda)]_+-[P_{\gamma-}(u_h,\lambda_h)]_+\|^2_{C}
 \\
&\qquad \qquad 
+\gamma^{-1}\left<(u-u_h),[P_{\gamma-}(u,\lambda)]_+-[P_{\gamma-}(u_h,\lambda_h)]_+\right>_{C}
\\
&\qquad 
\leq  \frac{1}{\alpha} \|u-v_h\|_{H^1(\Omega)}^2 +  \left<\lambda-\lambda_h,v_h-u\right>_{C}
\\
&\qquad \qquad 
-
\left<\mu_h-\lambda, u-u_h +[P_{\gamma-}(u,\lambda)]_+-[P_{\gamma-}(u_h,\lambda_h)]_+\right>_{C}+
2 |\mu_h|_s^2.
\end{align*}
Assuming that $C(\delta+1) \gamma^{-1} h^{2s} \leq \tfrac12 \alpha$, using that $\frac12 a^2
+ \frac12 b^2 + ab = \frac12 (a+b)^2$ and the Cauchy-Schwarz
inequality followed by the arithmetic-geometric inequality in the
second to last term in the right hand side we obtain
\begin{align}\nonumber
&\alpha \|u-u_h\|_{H^1(\Omega)}^2 
+
|\mu_h - \lambda_h|_s^2  
\\ \nonumber
&\qquad \qquad 
+ \gamma^{-1}\|(u-u_h)+[P_{\gamma-}(u,\lambda)]_+-[P_{\gamma-}(u_h,\lambda_h)]_+\|^2_{C}
\\ \label{3rd_step}
&\qquad \leq  \frac{4}{\alpha} \|u-v_h\|_{H^1(\Omega)}^2 +  4\left<\lambda-\lambda_h,v_h-u\right>_{C}
+
4\gamma\|\mu_h-\lambda\|^2_{C}+
8 |\mu_h|_s^2.
\end{align}
Observe that the $\delta$ in the condition may be omitted for $k \ge
2$.

It remains to control the Lagrange multiplier.
Observe that taking $v_h = R_h$ as defined in Lemma \ref{lem:lift} with $r_h = - \gamma 
I_{cf}\xi_h (\mu_h - \lambda_h)$ we may use Galerkin
orthogonality to obtain
\begin{align*}
&\gamma \|\xi^{\frac12}_h(\mu_h - \lambda_h)\|_{C}^2 - \gamma (\mu_h - \lambda_h, (1 -
I_{cf})\xi_h(\mu_h - \lambda_h))_{C}
\\
&\qquad \qquad +\gamma (\lambda - \mu_h, I_{cf}(\xi_h(\mu_h - \lambda_h)))_{C}
+a(u-u_h, R_h) 
\\
&\qquad = 0.
\end{align*}
Using the bound \eqref{eq:xicont}, $c^2_\xi \|\mu_h - \lambda_h\|_{C}^2
\leq \|\xi^{\frac12}_h(\mu_h - \lambda_h)\|_{C}^2$
we have
\begin{align*}
&\frac{c^2_\xi}{2} \gamma \|(\lambda - \lambda_h)\|_{C}^2 -
\frac{c_s^2}{c^2_\xi\delta} |\mu_h- \lambda_h|_s^2 -
\gamma\left(C +c_s^2 c^{-2}_\xi\right)\|\lambda - \mu_h\|_{C}^2
\\ 
&\qquad \qquad +a(u-u_h, R_h) 
\\
&\qquad \leq 0.
\end{align*}
Recall that by Lemma \ref{lem:lift} we have
\[
a(u-u_h, R_h)  \leq -\frac{ c^2_\xi}{ 4 } \gamma \|(\lambda -
\lambda_h)\|_{C}^2- \frac{c^2_\xi}{2} \gamma \|(\mu_h -
\lambda_h)\|_{C}^2 - \frac{C_R^2 \gamma}{c^2_\xi \alpha h^{2s}} \alpha\|\nabla (u - u_h)\|^2_\Omega
\]
from which we deduce that there exists a constant $C_{\lambda}>0$ such
that, assuming $\tfrac{\gamma}{\alpha h^{2s}} = \mathcal{O}(1)$.
\begin{equation}\label{lamcont}
\frac{c^2_\xi}{4 C_{\lambda}} \gamma \|(\lambda - \lambda_h)\|_{C}^2
- |\mu_h- \lambda_h|_s^2 - \alpha\|u - u_h\|^2_{H^1(\Omega)}\leq 
\gamma  \|\lambda - \mu_h\|_{C}^2.
\end{equation}
Multiplying both sides of \eqref{lamcont} by $1/2$ and adding it to
\eqref{3rd_step} leads to the inequality
\begin{align}\nonumber
&\frac12 \alpha \|u-u_h\|_{H^1(\Omega)}^2 +
\frac12 |\mu_h - \lambda_h|_s^2+c_\lambda \gamma \|(\lambda - \lambda_h)\|_{C}^2  
\\ \nonumber
&\qquad \qquad + \gamma^{-1}\|(u-u_h)+[P_{\gamma-}(u,\lambda)]_+-[P_{\gamma-}(u_h,\lambda_h)]_+\|^2_{C}
\\ \label{3rd_step2}
&\qquad 
\leq  \frac{C}{\alpha} \|u-v_h\|_{H^1(\Omega)}^2
+
C\gamma\|\mu_h-\lambda\|^2_{C}+
C |\mu_h|_s^2+  4\left<\lambda-\lambda_h,v_h-u\right>_{C}.
\end{align}
where $c_\lambda =\tfrac{c^2_\xi}{8
  C_{\lambda}}$. Finally splitting the last term on the right
hand side 
\[
4 \left<\lambda-\lambda_h,v_h-u\right>_{C} \leq \frac{c_\lambda}{2}
\gamma \|(\lambda - \lambda_h)\|_{C}^2 + 2 c_\lambda^{-1} \gamma^{-1} \|v_h-u\|^2_{C}
\]
we conclude that
\begin{align}\nonumber
&\alpha \|u-u_h\|_{H^1(\Omega)}^2 +
|\mu_h - \lambda_h|_s^2+c_\lambda \gamma \|(\lambda - \lambda_h)\|_{C}^2  
\\ \nonumber
&\qquad \qquad + \gamma^{-1}\|(u-u_h)+[P_{\gamma-}(u,\lambda)]_+-[P_{\gamma-}(u_h,\lambda_h)]_+\|^2_{C}
\\ \label{last_step}
&\qquad \lesssim  \frac{1}{\alpha} \|u-v_h\|_{H^1(\Omega)}^2
+\gamma\|\mu_h-\lambda\|^2_{C}+
|\mu_h|_s^2 +  4 \gamma^{-1} \|v_h-u\|^2_{C}.
\end{align}
\end{proof}
\begin{cor}
Assume that $u \in H^{r}(\Omega)$, $3/2< r \leq k+1$ and $\lambda \in
H^{r-1 - s}(\OmegaC)$, with $r-1 - s>0$ where $s=1/2$ for the Signorini problem and $s=1$
for the Obstacle problem and that $(u_h,\lambda_h)$ is the solution of
\eqref{FEM} with the contact operator defined by \eqref{infsup_stabform1} and under the same conditions on the parameters as in Theorem
\ref{apriori_error_2}. Then there holds
\begin{align*}\nonumber
&\alpha \|u-u_h\|_{H^1(\Omega)}  
+ \gamma\| (\lambda -
  \lambda_h)\|_{C} 
\\ \nonumber 
&\qquad \qquad  + \gamma^{-1/2}\|(u-u_h)+[P_{\gamma-}(u,\lambda)]_+-[P_{\gamma-}(u_h,\lambda_h)]_+\|_{C} 
\\
&\qquad \lesssim h^{r-1} (|u|_{H^{r}(\Omega)} + |\lambda|_{H^{r-1 - s}(\OmegaC)}).
\end{align*}
\end{cor}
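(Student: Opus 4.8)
The plan is to mirror the proof of Corollary~\ref{cor:error_est}: insert into the estimate \eqref{lestimate} of Theorem~\ref{apriori_error_2} the quasi-interpolants $v_h = i_h u$ (the nodal interpolant of $u$) and $\mu_h = \pi_l \lambda$ (the $L^2$-projection of $\lambda$ onto $\Lambda_h$, or onto $\Lambda^l_h\cap C^0(\OmegaC)$ in the stabilization-free case), and then estimate the four terms on the right-hand side of \eqref{lestimate} one by one, using standard interpolation and projection error bounds together with the scaling $\gamma = \gamma_0 h^{2s}$.

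First, $\|u - v_h\|^2_{H^1(\Omega)} \lesssim h^{2(r-1)} |u|^2_{H^r(\Omega)}$ by the standard nodal interpolation estimate (valid since $3/2 < r \le k+1$). For the term $\gamma^{-1}\|v_h - u\|_C^2$ one applies the trace inequality \eqref{trace} on the elements adjacent to $C$ when $C\subset\partial\Omega$, $s=1/2$, and uses $C=\Omega$, $s=1$ in the obstacle case, obtaining in both cases $\gamma^{-1}\|u - v_h\|_C^2 \lesssim \gamma_0^{-1} h^{2(r-1)} |u|^2_{H^r(\Omega)}$. For $\gamma\|\mu_h - \lambda\|_C^2$, the $L^2$-projection error on $C$ (noting that $0<r-1-s\le l+1$ since $r\le k+1$, $l=k-1$, $s\ge 0$) gives $\|\lambda - \mu_h\|_C \lesssim h^{\,r-1-s} |\lambda|_{H^{r-1-s}(\OmegaC)}$, whence $\gamma\|\mu_h - \lambda\|_C^2 \lesssim \gamma_0\, h^{2s + 2(r-1-s)} |\lambda|^2 = \gamma_0\, h^{2(r-1)} |\lambda|^2$.

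For the stabilization term $|\mu_h|^2_s$, the only slightly non-routine point (already used in Corollary~\ref{cor:error_est}) is that $\jump{\mu_h}$ is unchanged if any continuous function is subtracted from $\mu_h$; applying the trace inequality \eqref{trace_disc} elementwise in the definition \eqref{def:s} and then subtracting a continuous piecewise-polynomial approximation of $\lambda$ yields $|\mu_h|^2_s \lesssim \delta\, \gamma\, h^{-1}\bigl(\|\mu_h - \lambda\|_C^2 + \|\lambda - \text{(cont.\ interp.)}\|_C^2\bigr) \lesssim \delta\,\gamma_0\, h^{2s-1+2(r-1-s)}|\lambda|^2$, which is $\lesssim h^{2(r-1)}|\lambda|^2$ (for $s=1$ one has $2s-1 = 1 = 2s$ up to the relevant power after using $C=\Omega$; for $s=1/2$ the missing half power is recovered by the same inverse/trace argument as in Corollary~\ref{cor:error_est}). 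When $k\ge 2$ and the multiplier space is continuous, $s\equiv 0$ and this term simply vanishes.

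Collecting the four bounds in \eqref{lestimate} gives
\[
\alpha \|u-u_h\|_{H^1(\Omega)}^2 + \gamma\|\lambda-\lambda_h\|_C^2
+ \gamma^{-1}\bigl\|(u-u_h)+[P_{\gamma-}(u,\lambda)]_+-[P_{\gamma-}(u_h,\lambda_h)]_+\bigr\|^2_C
\lesssim h^{2(r-1)}\bigl(|u|^2_{H^r(\Omega)} + |\lambda|^2_{H^{r-1-s}(\OmegaC)}\bigr),
\]
and the claim follows on taking square roots, using $a+b+c\le\sqrt{3}\,(a^2+b^2+c^2)^{1/2}$ and, for the multiplier contribution, $\gamma \le \gamma^{1/2}$ for $h$ small. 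There is no serious obstacle here: once Theorem~\ref{apriori_error_2} is in hand, everything reduces to standard approximation theory; the only step demanding a little care is matching the powers of $h$ through $\gamma = \gamma_0 h^{2s}$ and observing that the hypothesis $r-1-s>0$ is exactly what makes the multiplier projection error estimate meaningful.
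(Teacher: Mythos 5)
Your proposal is correct and follows exactly the route the paper intends (its proof of this corollary is literally ``similar to that of Corollary~\ref{cor:error_est}''), namely inserting $v_h=i_h u$ and $\mu_h=\pi_l\lambda$ into \eqref{lestimate} and estimating the four right-hand side terms by standard interpolation/projection bounds combined with $\gamma=\gamma_0 h^{2s}$. The only blemish is the spurious factor $h^{-1}$ in your bound for $|\mu_h|_s^2$: since the weight $h$ in \eqref{def:s} cancels the $h^{-1}$ coming from the discrete trace inequality \eqref{trace_disc}, one gets directly $|\mu_h|_s^2=|\mu_h-w|_s^2\lesssim\delta\gamma\|\mu_h-w\|_C^2\lesssim\delta\gamma_0 h^{2(r-1)}|\lambda|^2_{H^{r-1-s}(\OmegaC)}$ for any continuous piecewise-polynomial $w$ approximating $\lambda$, so there are no missing half-powers to recover.
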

\begin{proof}
Similar to that of Corollary \ref{cor:error_est}.
\end{proof}
\section{Numerical examples}

In the numerical examples below, we define $h=1/\sqrt{\text{NNO}}$, where NNO denotes the number of nodes in a uniformly refined mesh.
We use the formulation \eqref{CompFEM} with the nonlinear term defined
by \eqref{stab_form1}. For the spaces we chose piecewise linear finite elements for the primal variable and piecewise constants for the Lagrange multipliers, constant per element for the obstacle problem, and
constant per element edge on the Signorini boundary for the Signorini problem.

\subsection{Smooth obstacle problem}

Our smooth obstacle example, adapted from \cite{NoSiVe03}, is posed on the square $\Omega=(-1,1)\times(-1,1)$ with $\psi=0$ and
\[
f=\left\{\begin{array}{c}
    8 r_0^2 (1-(r^2-r_0^2)) \quad \text{if $r\leq r_0$},\\
  8 (r^2+ (r^2-r_0^2)) \quad \text{if $r> r_0$},
\end{array}\right.
\]
where $r=\sqrt{x^2+y^2}$ and $r_0=1/4$, and
with Dirichlet boundary conditions taken from the corresponding exact solution 
\[
u = -[r^2-r_0^2]_+^2 .
\]

We choose $\gamma=\gamma_0 h$ with $\gamma_0 = 1/100$ and show the convergence in the $L_2$-- and $H^1$--norms in Figure \ref{fig:smooth}. An elevation of the computed solution on one of the meshes in a sequence is given in Fig. \ref{fig:elevationsmooth}. We note the optimal convergence of $O(h^2)$ in $L_2$ (dashed line has inclination 2:1) and $O(h)$ in $H^1$
(dotted line has inclination 1:1). 

\subsection{Nonsmooth obstacle problem}

This example was proposed by Braess et al. \cite{BrCaHo07}.
The domain is $\Omega= (-2, 2)\times (-2, 2) \setminus [0, 2)\times (-2, 0]$ with $\psi=0$ and
\[
f(r,\varphi) = r^{2/3}\sin{(2\varphi/3)}(\gamma'(r)/r-\gamma''(r))+\frac{4}{3}r^{-1/3}\gamma'(r)\sin(2\varphi/3)+\gamma_2(r)
\]
where, with $\hat{r}=2(r-1/4)$,
\[
\gamma_1(r)=\left\{\begin{array}{ll}1,& \hat{r} < 0\\
-6\hat{r}^5+15\hat{r}^4-10\hat{r}^3+1, & 0\leq\hat{r}<1\\
0, & \hat{r}\geq 1,\end{array}\right.
\]
\[
\gamma_2(r)=\left\{\begin{array}{ll}0,& r\leq 5/4 ,\\
1 & \text{elsewhere.}\end{array}\right.
\]
with Dirichlet boundary conditions taken from the corresponding exact solution 
\[
u(r,\varphi)=-r^{2/3}\gamma_1(r)\sin(2\varphi/3)
\]
which belongs to $H^{5/3-\varepsilon}(\Omega)$ for arbitrary $\varepsilon > 0$.

For this example we plot, in Fig. \ref{fig:nonsmootherror}, the error on consecutive refined meshes.
We note the suboptimal convergence in $L_2$ which agrees with the regularity of the exact solution. In Fig. \ref{fig:elnonsmo} we show an elevation of the approximate solution on one of the meshes used to compute convergence.

\subsection{Signorini problem}

The Signorini problem is posed on the unit square $(0,1)\times (0,1)$ with homogeneous Dirichlet boundary conditions at $y=1$, 
homogeneous Neumann boundary conditions at $x=0$ and $x=1$, and a Signorini boundary at $y=0$. The load is $f=-2\pi\sin{2\pi x}$ (following \cite{BB00}), and we set $\gamma_0=0.1$. No explicit solution is available and we instead use an overkill solution, using 66049 nodes (corresponding to $h\approx 4\times10^{-3})$ to estimate the error. In Fig. \ref{fig:signorinierror} we show the convergence in the $L_2$-- and $H^1$--norms and again
we observe optimal convergence of $O(h^2)$ in $L_2$ (dashed line has inclination 2:1) and $O(h)$ in $H^1$ (dotted line has inclination 1:1). Finally, in Fig. \ref{fig:elsigno} we show an elevation of the computed solution.

\section*{Acknowledgments}

This research was supported in part by EPSRC, UK \newline (EP/J002313/1), the 
Swedish Foundation for Strategic Research (AM13-0029), the Swedish Research 
Council (2011-4992, 2013-4708), and the Swedish Strategic Research Program 
Essence.

The first author  wishes to thank Dr. Franz Chouly,
Prof. Patrick Hild and Prof. Yves Renard for interesting discussions on Nitsche's
method for contact problems and the augmented Lagrangian method. 

\bibliographystyle{plain}
\bibliography{contact}

%
\newpage
\begin{figure}[h]
\begin{center}
\includegraphics[height=7cm]{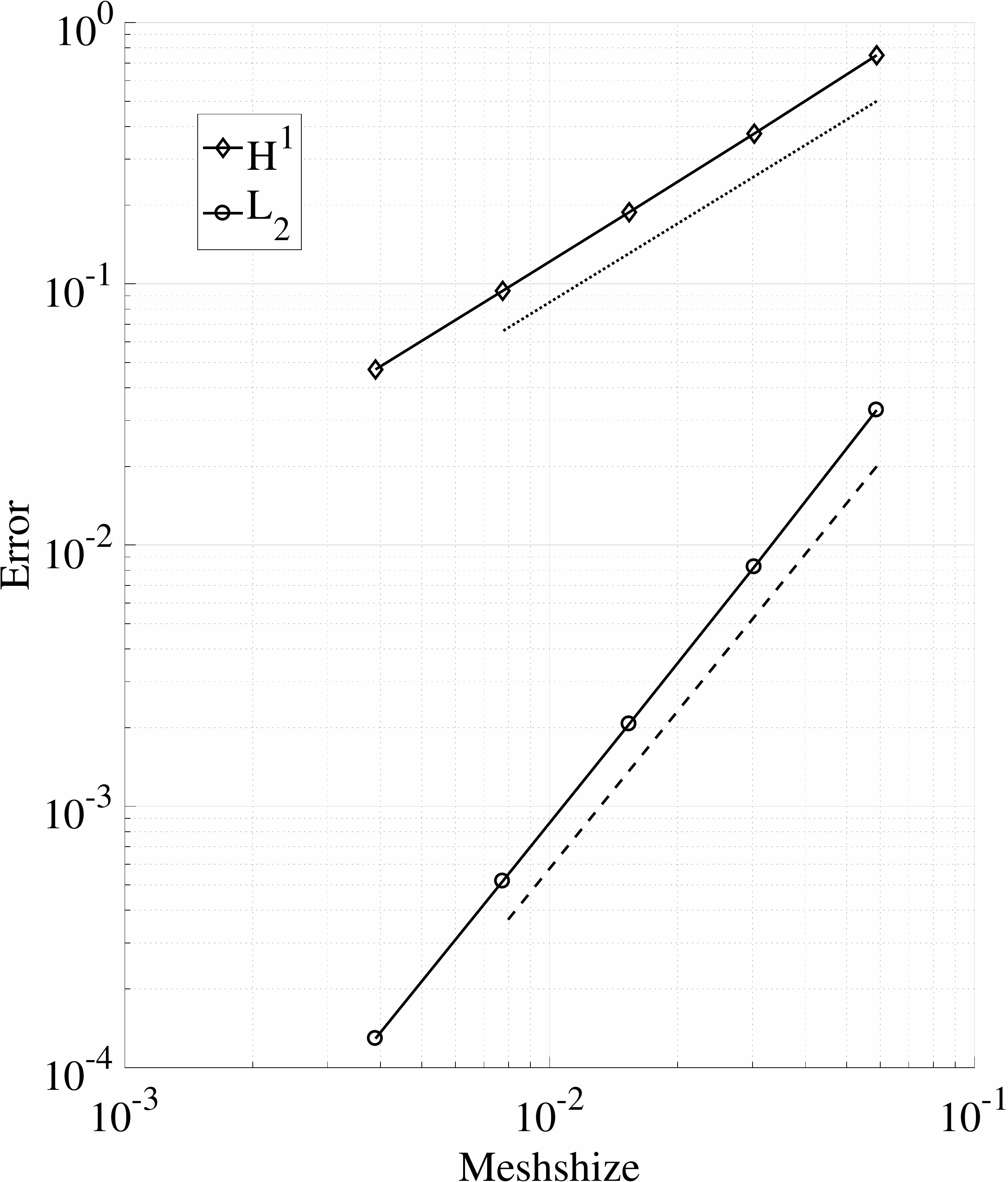}
\end{center}
\caption{Convergence for the smooth obstacle. Dotted line has inclination 1:1, dashed line has inclination 1:2.\label{fig:smooth}}
\end{figure}
\begin{figure}[h]
\begin{center}
\includegraphics[height=6cm]{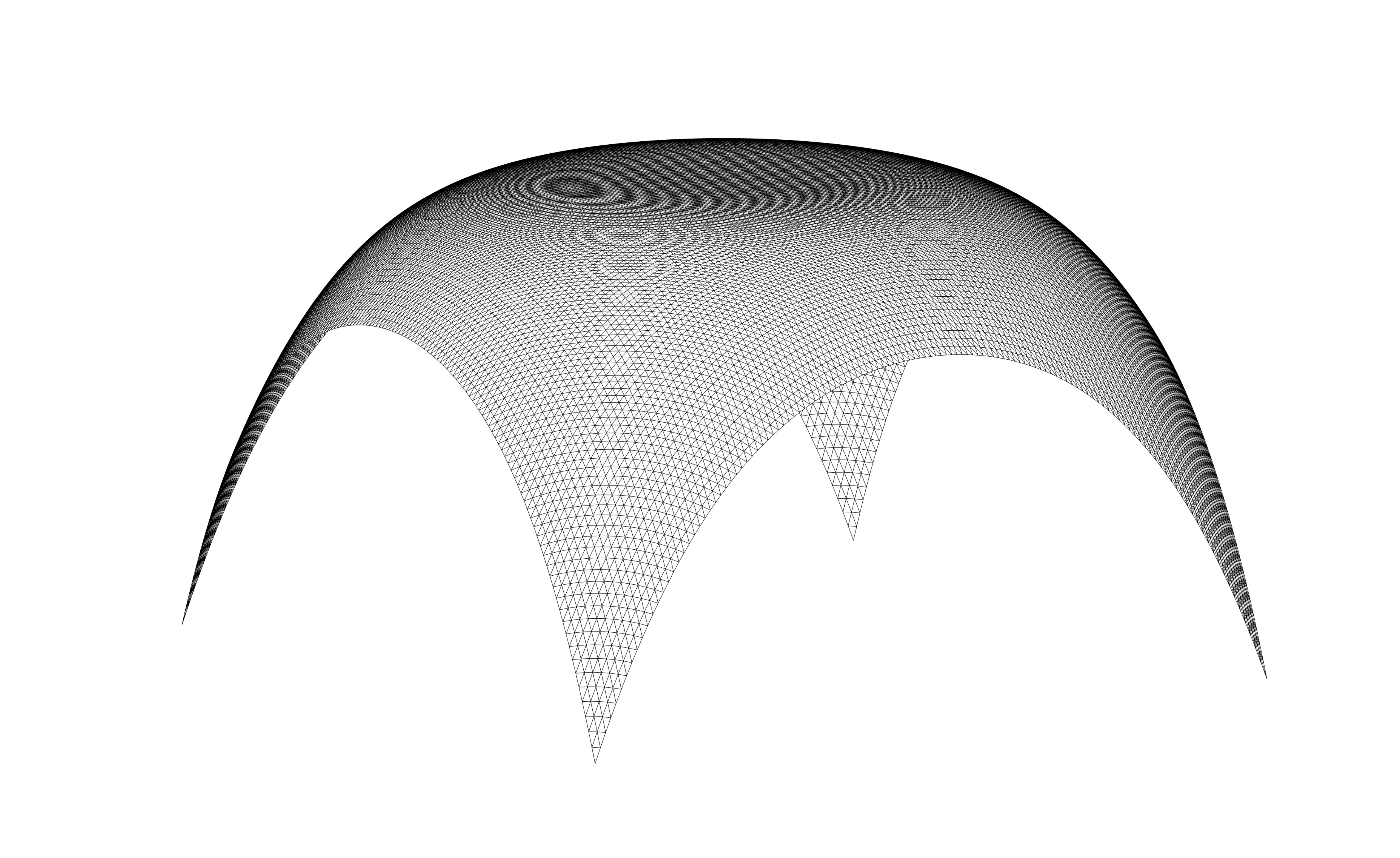}
\end{center}
\caption{Elevation of the discrete solution, smooth obstacle.\label{fig:elevationsmooth}}
\end{figure}

\begin{figure}[h]
\begin{center}
\includegraphics[height=7cm]{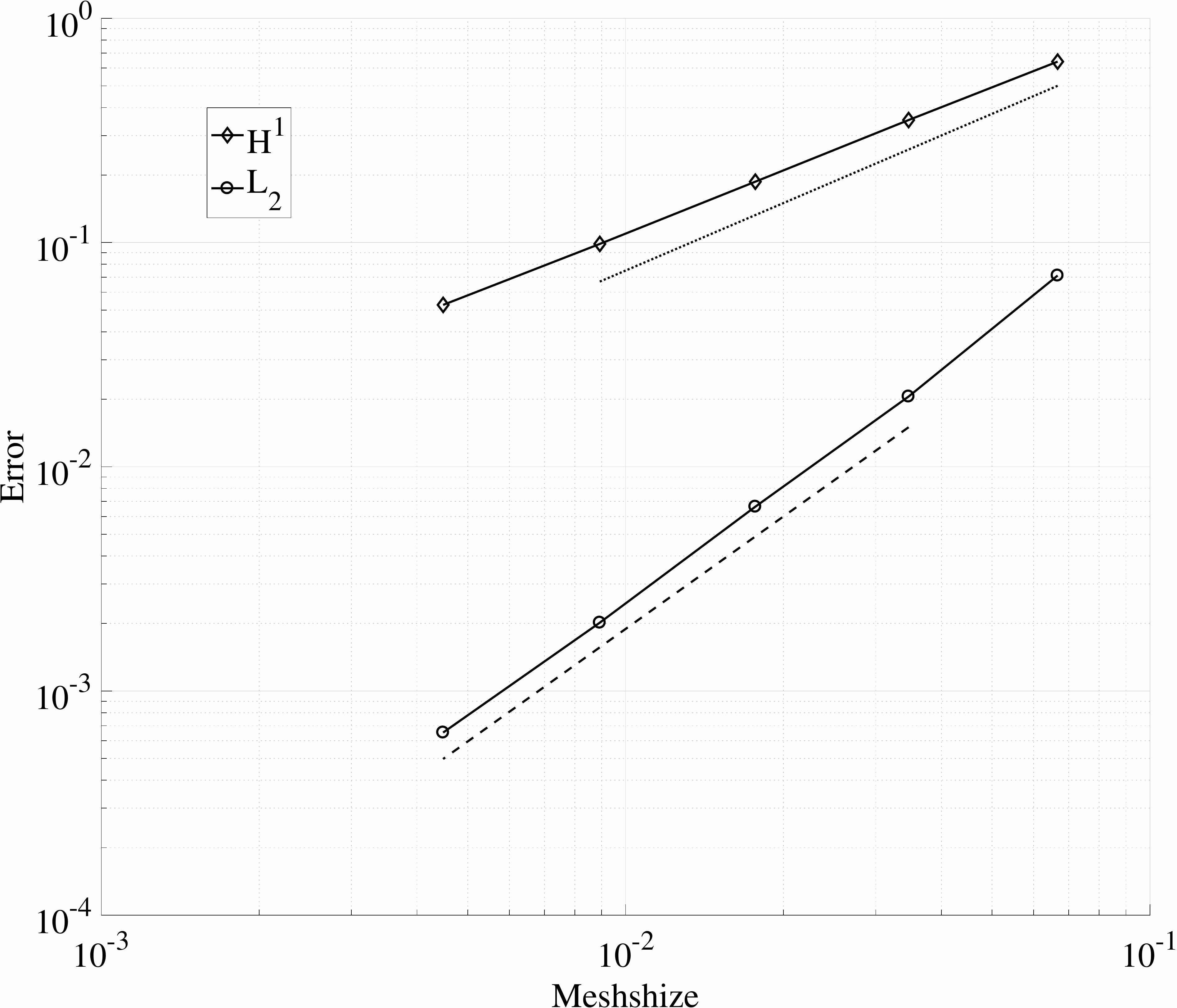}
\end{center}
\caption{Convergence for the nonsmooth obstacle. Dotted line has inclination 1:1, dashed line has inclination 1:5/3.\label{fig:nonsmootherror}}
\end{figure}
\begin{figure}[h]
\begin{center}
\includegraphics[height=6cm]{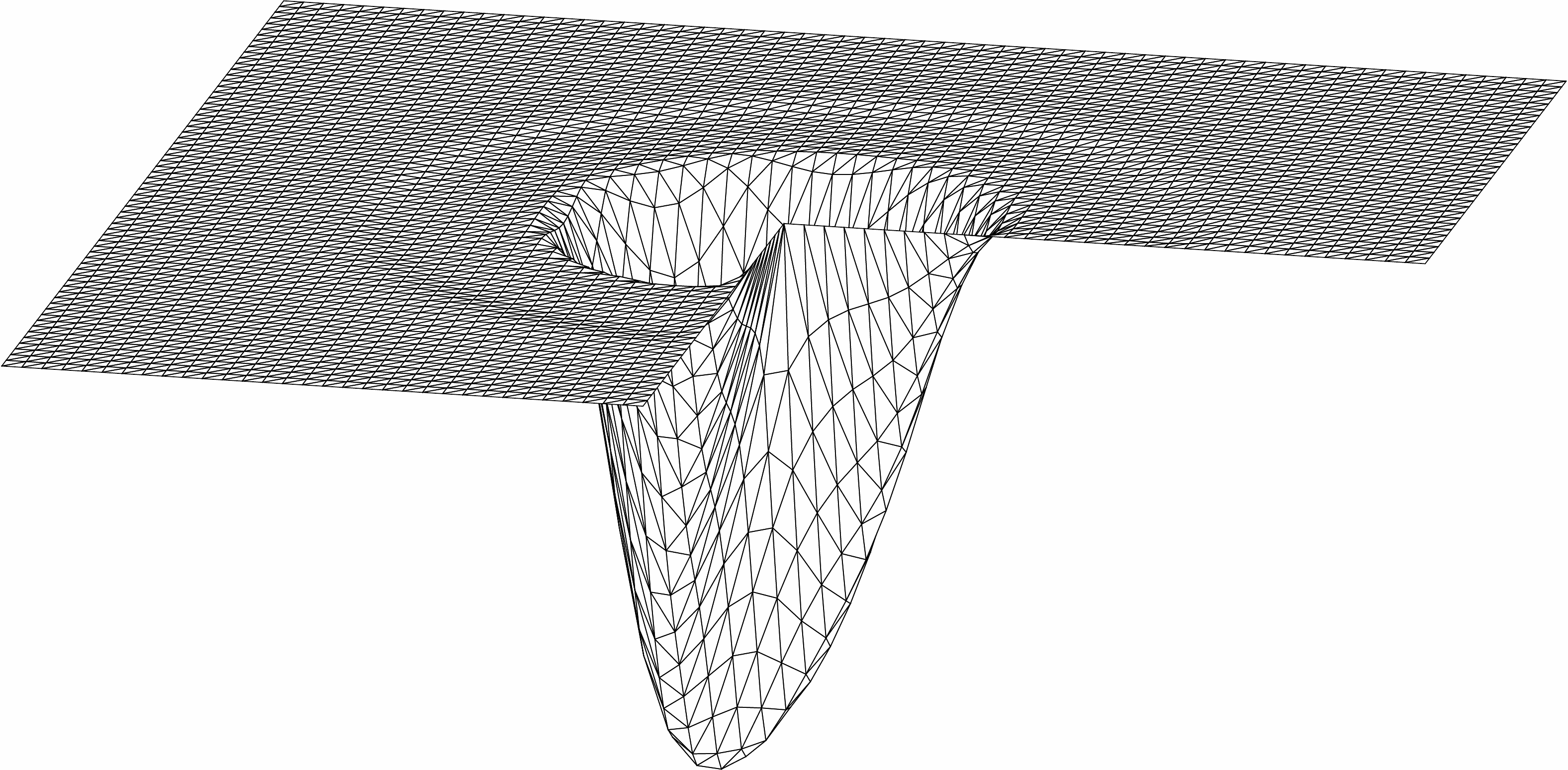}
\end{center}
\caption{Elevation of the discrete solution, nonsmooth obstacle.\label{fig:elnonsmo}}
\end{figure}

\begin{figure}[h]
\begin{center}
\includegraphics[height=12cm]{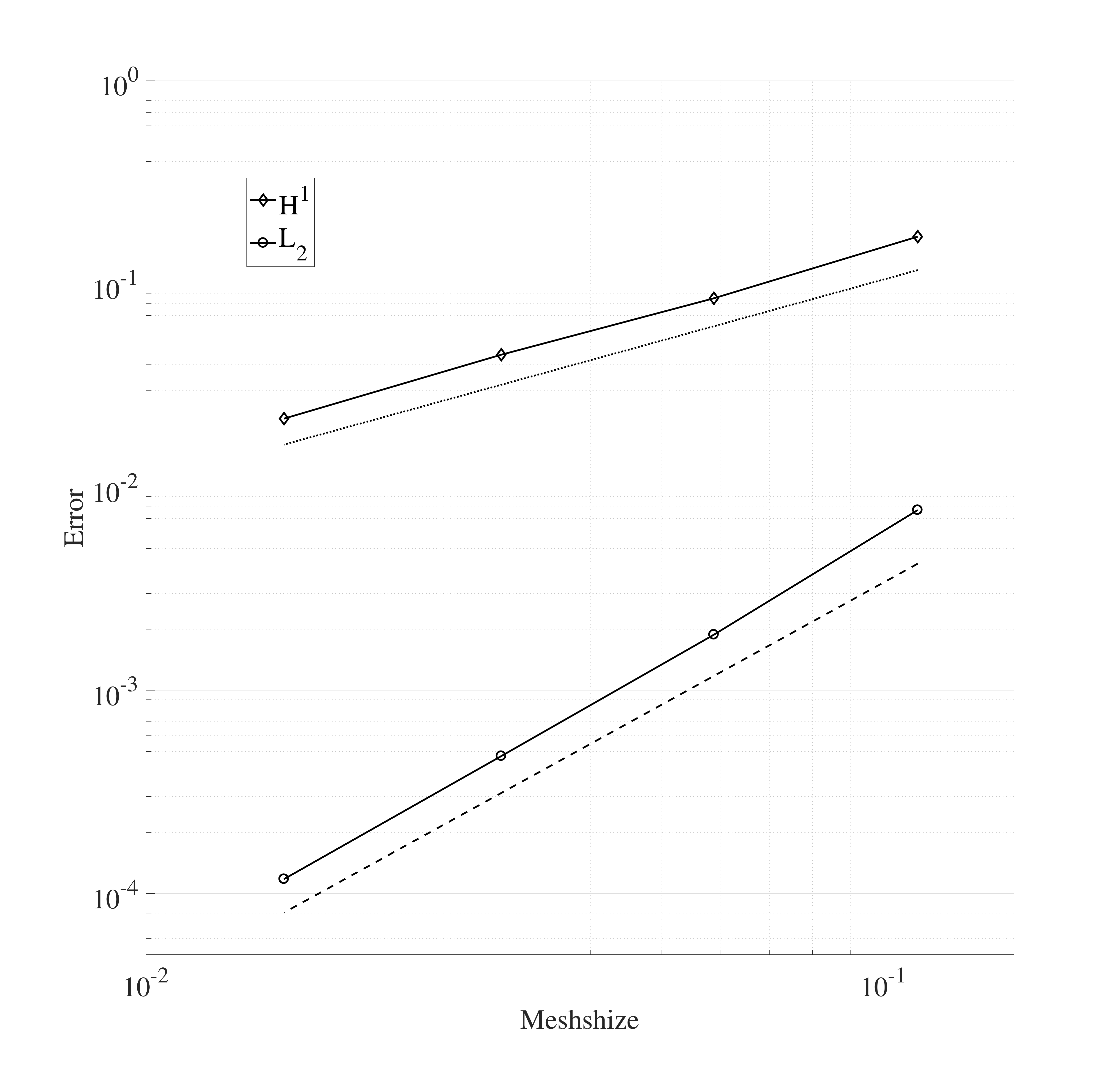}
\end{center}
\caption{Convergence for the Signorini case.\label{fig:signorinierror}}
\end{figure}
\begin{figure}[h]
\begin{center}
\includegraphics[height=8cm]{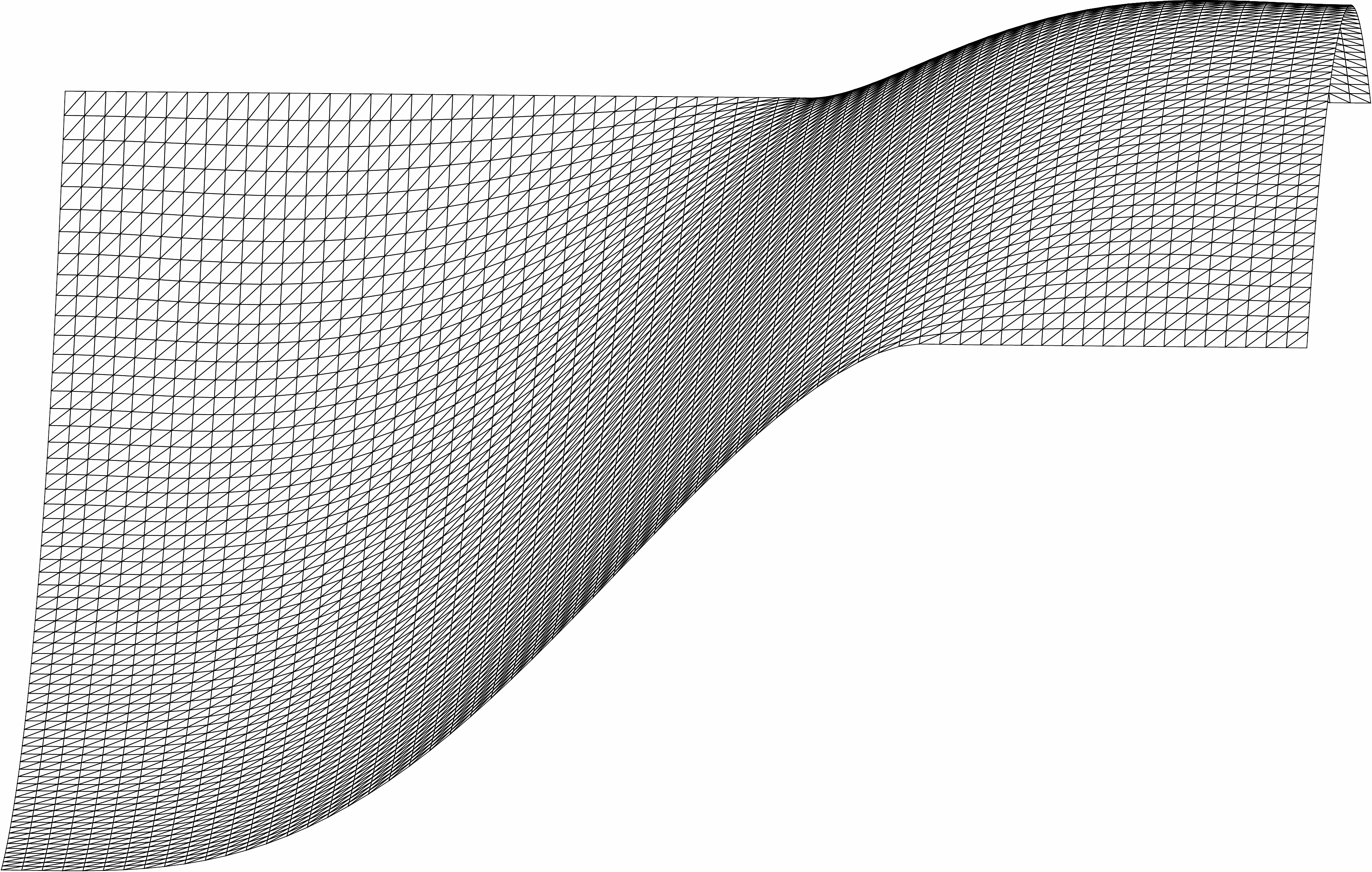}
\end{center}
\caption{Elevation of the discrete solution, Signorini case.\label{fig:elsigno}}
\end{figure}

\end{document}